\newcommand{\N}{\mathbb{N}}
\newcommand{\R}{\mathbb{R}}
\newcommand{\C}{\mathbb{C}}
\renewcommand{\Re}{\operatorname{Re}}
\DeclareMathOperator{\dom}{dom}
\DeclarePairedDelimiterX{\abs}[1]{\lvert}{\rvert}{#1}
\DeclarePairedDelimiterX{\norm}[1]{\lVert}{\rVert}{#1}
\DeclarePairedDelimiterX{\innprod}[2]{\langle}{\rangle}{#1,#2}
\newcommand{\spec}[1]{\sigma(#1)}
\renewcommand{\O}[1]{\mathcal{O}\bigl(#1\bigr)}
\newcommand{\Ol}[1]{\mathcal{O}\bigl(\lambda^{#1}\bigr)}
\theoremstyle{definition}
\newtheorem{theorem}{Theorem}[section]
\newtheorem{lemma}[theorem]{Lemma}
\newtheorem{proposition}[theorem]{Proposition}
\newtheorem{corollary}[theorem]{Corollary}
\renewenvironment{abstract}{%
	\small
	\centering
	\quote
	{\bfseries\abstractname.}%
}
\newenvironment{keywords}{%
	\small
	\centering
	\quote
	{\bfseries Keywords.}%
}
\title{\bfseries Spectral analysis of a coupled bending-torsion
	beam energy harvester: asymptotic results}
\author{Chris Vales\thanks{
	Mathematics \& Statistics, University of New Hampshire, Durham, NH, USA;
	Mathematics, Dartmouth College, Hanover, NH, USA
	(\texttt{chris.vales@dartmouth.edu}).}}
\date{}
\begin{document}

\maketitle

\begin{abstract}
This work is concerned with the spectral analysis of a
piezoelectric energy harvesting model based on a coupled bending-torsion
beam.
After building the problem's operator setting and showing that the
governing operator is nonselfadjoint with a purely discrete spectrum,
we derive an asymptotic approximation of its spectrum.
In doing so, we also prove that the addition of energy harvesting
can be viewed as a weak perturbation of the underlying beam dynamics,
in the sense that no piezoelectric parameters appear in the spectral
approximation's first two orders of magnitude.
We conclude by outlining future work based on numerical simulations.
\end{abstract}

\begin{keywords}
Coupled bending-torsion beam, nonselfadjoint operator,
discrete spectrum, asymptotic approximation,
weak perturbation.
\end{keywords}

\section{Introduction}\label{sec:intro}

A piezoelectric energy harvester is a device that utilizes the properties
of piezoelectric materials to convert mechanical strain energy into
electric energy.
The harvesting of mechanical vibration energy is currently considered for the
making of self powered microelectronic devices and remote sensors
\cite{Safaei2019,Liu2018,Cook2008},
with applications in areas such as health monitoring \cite{Inman2006}
and distributed sensor networks \cite{Roundy2004}.

In this work we consider a piezoelectric harvester extracting energy from the
vibration of a coupled bending-torsion beam.
Such coupled vibration naturally occurs when the beam's cross section has at
most one plane of symmetry, such as in the vibration of aircraft wings,
turbomachinery blading, and bridges \cite{Bal2004,Dokumaci1987}.
It can also occur by our breaking a beam's cross sectional symmetry on purpose,
which has been shown to increase the amount of harvested power compared to
purely bending vibration \cite{Abdelkefi2011,Abdelkefi2012}.

We consider a uniform cantilever beam with its left end fixed and its right
end free.
The beam's cross section is assumed to have only one plane of symmetry,
which causes the coupling of the bending and torsion motions.
A piezoelectric material layer is attached onto the top face of the beam
throughout its length.
Perfectly conducting electrodes at the top and bottom faces of the layer
connect it to a resistance load, thereby closing an electric circuit.
The layer is attached so that it harvests energy from the normal beam strain
due to bending, not the shear strain due to torsion.

The modeled quantities are
the beam's vertical centerline displacement $w(t,x)$,
the twist angle of its cross sections $\theta(t,x)$,
and the generated electric voltage $v(t)$,
where $t\geq 0$ denotes time and $x\in [0,L]$ space.
Using dot and prime to denote differentiation with respect to time and space
respectively, the complete electromechanical model reads
\begin{equation}\label{eq:harvester}
\begin{aligned}
	m\ddot{w}+S\ddot{\theta}+Ew''''&=0,\\
	J\ddot{\theta}+S\ddot{w}-G\theta''&=0,\\
	C_p\dot{v}+\frac{1}{R}v+C_D\dot{w}'(t,L)&=0,
\end{aligned}
\end{equation}
with boundary conditions
\begin{equation}\label{eq:harvester-BCs}
\begin{aligned}
	w(t,0)=w'(t,0)&=0,\\
	\theta(t,0)&=0,\\
	w'''(t,L)&=0,\\
	Ew''(t,L)+k_1\dot{w}'(t,L)+C_Iv&=0,\\
	G\theta'(t,L)+k_2\dot{\theta}(t,L)&=0
\end{aligned}
\end{equation}
and initial conditions
\begin{equation*}
\begin{aligned}
	w(0,x)&=w_0(x),\qquad& \dot{w}(0,x)&=w_1(x),\\
	\theta(0,x)&=\theta_0(x),\qquad& \dot{\theta}(0,x)&=\theta_1(x),\\
	v(0)&=v_0.
\end{aligned}
\end{equation*}
The first two equations of \eqref{eq:harvester} model the bending and
torsion vibration motions respectively, whereas the third equation
models the electric circuit.
In the mechanical equations,
$m$ denotes mass per unit length,
$J$ polar mass moment per unit volume,
$S$ the coupling constant with units of mass,
$E$ and $G$ the bending and torsion rigidity respectively.
In the electric equation,
$C_p$ is the piezoelectric layer's internal capacitance,
$R$ the load resistance,
and $C_D<0$ the direct (or forward) piezoelectric coupling
coefficient \cite{Erturk2008}.

In the boundary conditions \eqref{eq:harvester-BCs},
constant $C_I>0$ is the inverse (or backward) piezoelectric
coupling coefficient.
The terms involving positive constants $k_1$ and $k_2$
model vibration control mechanisms through strain-actuated damping
and are independent of the piezoelectric layer
\cite{Bal2000,Bal2004}.

In this work we derive an asymptotic approximation of the spectrum of
the operator governing the model's dynamics.
Moreover, we show that the addition of piezoelectric energy
harvesting can be considered a weak perturbation of the underlying
beam dynamics, in the sense that no piezoelectric parameters appear in
the first two orders of magnitude of the spectral approximation.
Our motivation for pursuing this line of research is to facilitate the
study of the geometric properties of the governing operator's set
of eigenvectors---particularly completeness, minimality, and the
Riesz basis property.

Asymptotic spectral investigations have been made in the past for various
beams and associated piezoelectric energy harvesters.
These include works on the Euler-Bernoulli beam \cite{Chen1987,Chen1990}
and associated energy harvester \cite{Shubov2016,Shubov2017};
the Rayleigh beam \cite{BRao1998,Shubov2014b};
the Timoshenko beam
\cite{Coleman1993,Geist1997,Geist1998,Geist2001,Shubov1999,Shubov2002}
and associated energy harvester \cite{Shubov2018};
the coupled bending-torsion beam
\cite{Dokumaci1987,Bishop1989,Bal2000,Bal2004,Shubov2004}.
Among other results, the Riesz basis property has been proven for
models based on the Euler-Bernoulli beam \cite{Shubov2017},
the Rayleigh beam \cite{BRao1998,Shubov2014b}
and the Timoshenko beam \cite{Shubov1999,Shubov2002}.

The present work is based on the analysis of the coupled bending-torsion
beam initiated in \cite{Bal2004,Shubov2004}
and is an effort to improve some aspects of it and refine its results.
More specifically, we extend the considered model by adding energy
harvesting and study its effect on the underlying beam dynamics.
We provide a detailed construction of the problem's operator setting,
identifying sufficient parameter conditions for its well-posedness.
Additionally, we refine the asymptotic analysis by including higher order
terms and by studying the limitations of the employed methodology.
In particular, we show that the derived asymptotic approximation is
valid for an infinite subset of the governing operator's spectrum,
but not necessarily for the whole spectrum as claimed in
\cite{Bal2004,Shubov2004}.
Finally, we propose numerical simulations that will be used to
supplement the derived asymptotic results.

In Section \ref{sec:setting} we build the operator setting of the problem
and describe the governing operator's spectrum.
The main results are presented in Section \ref{sec:main}
and proven in Section \ref{sec:proof},
followed by a discussion in Section \ref{sec:discussion}.
Appendix \ref{sec:omitted-proofs} contains proofs omitted in the
main text.
More detailed proofs for all results in this work can be found in
\cite{ValesPhD}.

\section{Operator setting}\label{sec:setting}

We define vector
\begin{equation*}
f:=\left(f_0(t,x),f_1(t,x),f_2(t,x),f_3(t,x),f_4(t)\right)
:=(w,\dot{w},\theta,\dot{\theta},v),
\end{equation*}
with smooth and complex valued component functions.
Assuming that $S<m$ and $S<J$, the model equations \eqref{eq:harvester}
are written as
\begin{equation}\label{eq:op-evolution}
\dot{f}=iAf,
\end{equation}
with governing operator
\begin{equation}\label{eq:op-generator}
A:=
\begin{bmatrix}
0 &-i &0 &0 &0\\
i\dfrac{EJ}{D}\dfrac{d^4}{dx^4} &0 &i\dfrac{GS}{D}\dfrac{d^2}{dx^2} &0 &0\\
0 &0 &0 &-i &0\\
-i\dfrac{ES}{D}\dfrac{d^4}{dx^4} &0 &-i\dfrac{Gm}{D}\dfrac{d^2}{dx^2} &0 &0\\
0 &-i\dfrac{C_D}{C_p}\delta_L' &0 &0 &i\dfrac{1}{C_pR}
\end{bmatrix},
\end{equation}
where $D:=mJ-S^2>0$, $\delta_L'g:=-g'(t,L)$, and time $t$ is from now on
treated as a parameter.
To define the appropriate domain for operator $A$, we consider space
\begin{equation*}
\widetilde{\mathcal{H}}:=\left\{f_i\in C^\infty([0,L]),
	f_4\in\C, i=0,\dotsc ,3:
	f_0(0)=0,\,f_0'(0)=0,\,f_2(0)=0\right\}.
\end{equation*}
and the following inner product, based on the system's mechanical and
electric energy,
\begin{equation}\label{eq:inner-prod}
\innprod{f}{g}:=\frac{1}{2}\int_0^L\left[Ef_0''\bar{g}_0''+mf_1\bar{g}_1
	+Gf_2'\bar{g}_2'+Jf_3\bar{g}_3+S\left(f_1\bar{g}_3
	+f_3\bar{g}_1\right)\right]dx+\frac{1}{2}C_pf_4\bar{g}_4,
\end{equation}
with $f$, $g\in\widetilde{\mathcal{H}}$
and bars denoting complex conjugation.

\begin{lemma}\label{lem:inner-prod}
The functional
$\innprod{\cdot}{\cdot}:\widetilde{\mathcal{H}}
\times\widetilde{\mathcal{H}}\rightarrow\C$
of \eqref{eq:inner-prod} forms an
inner product in $\widetilde{\mathcal{H}}$.
\end{lemma}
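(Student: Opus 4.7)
The plan is to verify the three defining properties of an inner product in turn, with the positivity being the only step that requires real work.

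First I would observe that conjugate symmetry $\innprod{f}{g}=\overline{\innprod{g}{f}}$ and linearity in the first argument are immediate by inspection: each integrand term is either of the form $c\,\alpha(f)\overline{\alpha(g)}$ for some linear differential operator $\alpha$ and positive constant $c$, or else the manifestly symmetric cross term $S(f_1\bar g_3+f_3\bar g_1)$; the scalar summand $\tfrac12 C_p f_4\bar g_4$ behaves the same way.

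The main work lies in showing $\innprod{f}{f}\geq 0$ with equality only when $f=0$. Setting $g=f$, four of the six terms are non-negative as sums/integrals of squared moduli weighted by positive constants ($E$, $G$, $C_p$, and the diagonal parts $m|f_1|^2$, $J|f_3|^2$). The obstacle is the indefinite cross term $S(f_1\bar f_3+f_3\bar f_1)=2S\Re(f_1\bar f_3)$. I would absorb it into the $f_1,f_3$ contributions by rewriting the pointwise quadratic form
\begin{equation*}
Q(f_1,f_3):=m\abs{f_1}^2+J\abs{f_3}^2+2S\Re(f_1\bar f_3)
\end{equation*}
as $\xi^{\mathsf T}M\xi$ with $\xi=(\abs{f_1},\abs{f_3})^{\mathsf T}$ (up to the trivial inequality $2\Re(f_1\bar f_3)\geq -2\abs{f_1}\abs{f_3}$) and $M$ having trace $m+J>0$ and determinant $mJ-S^2=D>0$, where positivity of $D$ follows from the standing assumption $S<m$, $S<J$. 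Hence $M$ is positive definite and $Q\geq 0$ pointwise, with equality forcing $f_1=f_3\equiv 0$.

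Finally I would deduce definiteness from $\innprod{f}{f}=0$: each non-negative summand must vanish, giving $f_0''\equiv 0$, $f_2'\equiv 0$, $f_4=0$, and $f_1\equiv f_3\equiv 0$ from the argument above. Combined with the boundary constraints built into $\widetilde{\mathcal{H}}$, namely $f_0(0)=f_0'(0)=0$ and $f_2(0)=0$, integration yields $f_0\equiv 0$ and $f_2\equiv 0$, so $f=0$. The only subtle step is recognizing that the coupling condition $D>0$ (guaranteed here by $S<m$ and $S<J$) is exactly what is needed to keep the mixed $f_1,f_3$ block positive definite; everything else is routine.
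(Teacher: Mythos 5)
Your proposal is correct and follows essentially the same route as the paper: both arguments reduce the indefinite cross term via $2S\Re(f_1\bar f_3)\geq -2S\abs{f_1}\abs{f_3}$ and then use $D=mJ-S^2>0$ to show the resulting quadratic form in $(\abs{f_1},\abs{f_3})$ is positive definite, before invoking the boundary conditions in $\widetilde{\mathcal{H}}$ to upgrade $f_0''\equiv 0$, $f_2'\equiv 0$ to $f_0\equiv f_2\equiv 0$. The only cosmetic difference is that the paper establishes positive definiteness by an explicit completion of squares, writing the form as $(\sqrt{m}\abs{f_1}-\sqrt{J}\abs{f_3})^2+2\tfrac{D}{\sqrt{mJ}+S}\abs{f_1}\abs{f_3}$, whereas you use the trace--determinant criterion for the $2\times 2$ matrix; both hinge on exactly the same condition $D>0$.
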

\begin{proof}
See Appendix \ref{sec:omitted-proofs}.
\end{proof}

\begin{lemma}\label{lem:norm-equiv}
Let
$\mathcal{H}_1:=H^2((0,L))\times L^2([0,L])\times H^1((0,L))
\times L^2([0,L])\times\C$
with its usual product norm
\begin{equation}\label{eq:norm1}
	\norm{f}_1^2:=\int_0^L\biggl(\sum_{j=0}^2\abs{f_0^{(j)}}^2
		+\abs{f_1}^2+\sum_{k=0}^1\abs{f_2^{(k)}}^2+\abs{f_3}^2\biggr)dx
		+\abs{f_4}^2.
\end{equation}
If $S<m$ and $S<J$, then norm \eqref{eq:norm1}
and the norm induced by \eqref{eq:inner-prod}
are equivalent in $\widetilde{\mathcal{H}}$.
\end{lemma}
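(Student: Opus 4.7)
The plan is to show the two inequalities $\|f\|_1^2\leq c_1\innprod{f}{f}$ and $\innprod{f}{f}\leq c_2\norm{f}_1^2$ separately. The upper bound is essentially immediate: each term appearing in \eqref{eq:inner-prod} is already controlled by a term in \eqref{eq:norm1}, the only subtlety being the cross term $S(f_1\bar{g}_3+f_3\bar{g}_1)$ evaluated on the diagonal, which equals $2S\Re(f_1\bar{f}_3)$ and satisfies $\abs{2S\Re(f_1\bar{f}_3)}\leq S(\abs{f_1}^2+\abs{f_3}^2)$ by the Cauchy--Schwarz (or AM--GM) inequality. Thus $\innprod{f}{f}\leq \tfrac{1}{2}\max(E,G,m+S,J+S,C_p)\norm{f}_1^2$.

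The lower bound is the substantive direction. The key observation is that the hypothesis $S<m$ and $S<J$ makes the quadratic form $m\abs{f_1}^2+J\abs{f_3}^2+2S\Re(f_1\bar{f}_3)$ uniformly positive: using the same Cauchy--Schwarz estimate from below gives
\begin{equation*}
	m\abs{f_1}^2+J\abs{f_3}^2+2S\Re(f_1\bar{f}_3)\geq (m-S)\abs{f_1}^2+(J-S)\abs{f_3}^2,
\end{equation*}
with both coefficients strictly positive. Setting $\alpha:=\tfrac{1}{2}\min(E,G,m-S,J-S,C_p)>0$, this yields
\begin{equation*}
	\innprod{f}{f}\geq\alpha\biggl[\int_0^L\bigl(\abs{f_0''}^2+\abs{f_1}^2+\abs{f_2'}^2+\abs{f_3}^2\bigr)dx+\abs{f_4}^2\biggr].
\end{equation*}

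What is still missing on the right-hand side are the $\abs{f_0}^2$, $\abs{f_0'}^2$, and $\abs{f_2}^2$ terms appearing in $\norm{f}_1^2$. This is where the boundary conditions built into $\widetilde{\mathcal{H}}$ do the work: because $f_0(0)=f_0'(0)=0$ and $f_2(0)=0$, writing $f_0'(x)=\int_0^xf_0''(s)\,ds$, $f_0(x)=\int_0^xf_0'(s)\,ds$, and $f_2(x)=\int_0^xf_2'(s)\,ds$ and applying Cauchy--Schwarz to each integral gives pointwise bounds that integrate to Poincaré-type estimates
\begin{equation*}
	\int_0^L\abs{f_0'}^2\,dx\leq L^2\!\!\int_0^L\abs{f_0''}^2\,dx,\quad \int_0^L\abs{f_0}^2\,dx\leq L^4\!\!\int_0^L\abs{f_0''}^2\,dx,\quad \int_0^L\abs{f_2}^2\,dx\leq L^2\!\!\int_0^L\abs{f_2'}^2\,dx.
\end{equation*}
Combining these with the previous lower bound yields $\norm{f}_1^2\leq (1+L^2+L^4)\alpha^{-1}\innprod{f}{f}$, closing the equivalence.

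The main (and really only) obstacle is recognizing that $S<m$ and $S<J$ is precisely the right hypothesis to make the $2\times 2$ coupling form in $(f_1,f_3)$ uniformly positive definite; once that is in place, the argument reduces to standard Poincaré estimates exploiting the clamped conditions at $x=0$. I would present the proof in the order: coupling form positivity $\to$ reduced-norm lower bound $\to$ Poincaré upgrade $\to$ trivial upper bound.
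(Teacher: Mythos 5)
Your proposal is correct and follows essentially the same route as the paper's proof: the trivial upper bound via $2\abs{f_1}\abs{f_3}\leq\abs{f_1}^2+\abs{f_3}^2$, positivity of the $(f_1,f_3)$ coupling form from $S<\min(m,J)$, and Poincar\'e-type estimates from the clamped conditions at $x=0$ to recover the lower-order terms of $f_0$ and $f_2$. The only difference is cosmetic ordering --- the paper folds the Poincar\'e constants $c_0=(L^4+L^2+1)^{-1}$ and $c_2=(L^2+1)^{-1}$ directly into the minimum defining its lower-bound constant, whereas you apply the Poincar\'e upgrade after extracting a single constant $\alpha$; the resulting constants are equivalent.
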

\begin{proof}
See Appendix \ref{sec:omitted-proofs}.
\end{proof}

The well-posedness of the operator setting rests on the condition
$S<\min (m, J)$,
where $S$ is the coupling constant of the bending and torsion motions.
Therefore, the present work and derived results are valid for
\emph{weakly} coupled bending-torsion beams.

Using the norm equivalence result, we define the problem's state space
as the closure of $\widetilde{\mathcal{H}}$ in the topology generated
by the norm induced by \eqref{eq:inner-prod}, which yields the
Hilbert space
\begin{multline}\label{eq:hilbert-space}
\mathcal{H} = \Bigl\{f_0\in H^2((0,L)),\, f_1,f_3\in L^2([0,L]),\,
	f_2\in H^1((0,L)),\, f_4\in\C :\Bigr.\\
	\Bigl. f_0(0)=0,\, f_0'(0)=0,\, f_2(0)=0\Bigr\}.
\end{multline}
The domain of operator $A$ is then defined as
\begin{equation}\label{eq:op-domain}
\begin{split}
	\dom A:=\Bigl\{f\in\mathcal{H}:\,&f_0\in H^4((0,L)),\,f_1,f_2\in
		H^2((0,L)),\,f_3\in H^1((0,L)),\,f_4\in\C,\Bigr.\\
	&\Bigl.f_1(0)=0,\,f_1'(0)=0,\,f_3(0)=0,\,f_0'''(L)=0,\Bigr.\\
	&\Bigl.Ef_0''(L)+k_1f_1'(L)+C_If_4=0,\,
		Gf_2'(L)+k_2f_3(L)=0\Bigr\}.
\end{split}
\end{equation}

\begin{lemma}\label{lem:domain-dense}
The domain of operator $A$ is dense in $\mathcal{H}$.
\end{lemma}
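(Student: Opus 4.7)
The plan is to construct an explicit linear manifold $D\subset\mathcal{H}$ contained in $\dom A$ and show that it is dense. The guiding observation is that, among the conditions defining $\dom A$, those imposed at $x=0$ are essential (Dirichlet-type) conditions that survive the $\mathcal{H}$-closure, while those at $x=L$ involve $f_0''$, $f_0'''$, $f_1'$, $f_2'$, and $f_3$ evaluated at the endpoint; these quantities have no trace at the level of $\mathcal{H}$ and hence can be prescribed freely without constraining the $\mathcal{H}$-topology.

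Given $f=(f_0,f_1,f_2,f_3,f_4)\in\mathcal{H}$, I would build an approximating sequence $f^{(n)}\in\dom A$ componentwise. First, keep $f_4^{(n)}=f_4$ and absorb the inhomogeneous part of the coupling condition $Ef_0''(L)+k_1f_1'(L)+C_If_4=0$ by subtracting the fixed smooth carrier $h(x):=-\frac{C_If_4}{2E}x^2$ from $f_0$; one checks that $h(0)=h'(0)=0$, $Eh''(L)+C_If_4=0$, and $h'''(L)=0$. Second, approximate $f_1$ and $f_3$ in $L^2$ by test functions in $C_c^\infty((0,L))$: these vanish near both endpoints, so the BCs at $0$ hold trivially and the coupling terms $k_1f_1'(L)$, $k_2f_3(L)$ drop out, reducing the remaining $L$-boundary conditions for $g_0:=f_0-h$ and $f_2$ to the homogeneous natural ones $g_0''(L)=g_0'''(L)=0$ and $f_2'(L)=0$. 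Third, approximate $g_0$ in $H^2$ and $f_2$ in $H^1$ by $C^\infty$ functions satisfying both the essential BCs at $0$ and these homogeneous natural BCs at $L$, then set $f_0^{(n)}:=h+g_0^{(n)}$.

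The technical core is the density claim used in the third step: smooth functions satisfying the essential \emph{and} the natural boundary conditions are dense in the Sobolev completion defined by only the essential ones. I would prove this via a cut-off correction. Starting from a standard $H^2$-approximation $g_n\in C^\infty([0,L])$ with $g_n(0)=g_n'(0)=0$, subtract
\[
\chi_{\varepsilon_n}(x)\Bigl[\tfrac{1}{2}g_n''(L)(x-L)^2+\tfrac{1}{6}g_n'''(L)(x-L)^3\Bigr],
\]
where $\chi_{\varepsilon_n}$ is a smooth cutoff supported in $[L-\varepsilon_n,L]$ and identically $1$ near $L$. By construction this correction zeroes out the unwanted boundary derivatives at $L$; the analogous $H^1$ problem for $f_2$ is handled the same way with a single linear correction and is strictly easier.

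The main obstacle is controlling this correction in the Sobolev norm, because $H^2$ convergence gives no trace control on second derivatives: the quantities $|g_n''(L)|$ and $|g_n'''(L)|$ are a priori unbounded in $n$. The remedy is that $\varepsilon_n$ is at our disposal---choosing $\varepsilon_n=o\bigl(1/(1+|g_n''(L)|^2+|g_n'''(L)|^2)\bigr)$ and computing the $L^2$ norms of the correction's first three derivatives on the shrinking interval $[L-\varepsilon_n,L]$ shows that each scales like a negative power of $n$, so the correction vanishes in $H^2$. This completes the approximation and hence the density of $\dom A$.
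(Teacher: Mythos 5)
Your proof is correct and shares the paper's componentwise skeleton --- keep $f_4$, approximate $f_1$ and $f_3$ by elements of $C_c^\infty((0,L))$ so that the terms $k_1f_1'(L)$ and $k_2f_3(L)$ drop out, then handle $f_0$ and $f_2$ separately --- but the device you use for the remaining conditions at $x=L$ is genuinely different. The paper never corrects boundary traces of an approximant: it works at the level of the highest derivative, writing $g_0(x)=\int_0^x\int_0^y g_0''(z)\,dz\,dy$, replacing $g_0''$ near $L$ by the constant $\alpha=-C_IE^{-1}g_4$, mollifying, and integrating back up, so that $f_0''(L)=\alpha$ and $f_0'''(L)=0$ hold exactly by construction and the $H^2$ error reduces, via Cauchy--Schwarz, to the $L^2$ error of the second derivatives (and likewise for $f_2$ through $f_2'$). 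You instead homogenize the coupling condition with the quadratic carrier $h$ (which encodes the same constant $\alpha$ as a global second derivative rather than one localized near $L$) and then prove a density lemma by a shrinking-cutoff Taylor correction at $L$; the difficulty you correctly identify and resolve is that $H^2$ convergence gives no control on $g_n''(L)$ or $g_n'''(L)$, so the cutoff width $\varepsilon_n$ must be slaved to these traces, and your bound $\norm{(\chi_{\varepsilon_n}p_n)''}_{L^2}^2\lesssim\bigl(\abs{g_n''(L)}+\abs{g_n'''(L)}\varepsilon_n\bigr)^2\varepsilon_n$ is the right one. What each buys: the paper's integrate-up construction is the more economical, since the norm control is automatic and no uncontrolled traces ever appear; your argument isolates a reusable principle --- natural boundary conditions at an endpoint are invisible to the closure, precisely because the relevant quantities have no trace in the $\mathcal{H}$-topology --- which applies verbatim to $f_2$ and would transfer to other boundary-value settings without redoing the representation formulas. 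One cosmetic omission: your convergence is in $\norm{\cdot}_1$, so you should invoke Lemma \ref{lem:norm-equiv} at the end to conclude convergence in $\mathcal{H}$, exactly as the paper does.
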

\begin{proof}
See Appendix \ref{sec:omitted-proofs}.
\end{proof}

Next, we show that $A$ is nonselfadjoint and has a compact inverse,
properties we will use to describe its spectrum.

\begin{proposition}
Operator $A$ is unbounded, closed and nonselfadjoint.
\end{proposition}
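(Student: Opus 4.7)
The plan is to verify the three properties in turn, using Lemma \ref{lem:norm-equiv} to pass between $\innprod{\cdot}{\cdot}$ and the product Sobolev norm \eqref{eq:norm1} whenever convenient.

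For unboundedness, I would exhibit a bounded sequence in $\dom A$ whose $A$-image diverges. Starting from a nontrivial $\varphi\in C_c^\infty([1,2])$, the rescalings $\varphi_n(x):=n^{-3/2}\varphi(nx)$ lie in $C_c^\infty((0,L))$ for $n\geq 2/L$ and satisfy $\sup_n\|\varphi_n''\|_{L^2}<\infty$ while $\|\varphi_n''''\|_{L^2}\to\infty$. Setting $f^{(n)}:=(\varphi_n,0,0,0,0)$, the compact support in $(0,L)$ makes every boundary condition in \eqref{eq:op-domain} automatic, so $f^{(n)}\in\dom A$; since both the second and fourth components of $Af^{(n)}$ carry a $\varphi_n''''$ term, $\|Af^{(n)}\|\to\infty$ while $\|f^{(n)}\|$ stays bounded.

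For closedness, suppose $f^{(n)}\to f$ and $Af^{(n)}\to g$ in $\mathcal{H}$. Rows one and three of $A$ give $-if_1^{(n)}\to g_0$ in $H^2((0,L))$ and $-if_3^{(n)}\to g_2$ in $H^1((0,L))$; comparison with the $L^2$ limits already available forces $f_1=ig_0\in H^2$ and $f_3=ig_2\in H^1$. Rows two and four then form a $2\times 2$ system for $f_0''''$ and $f_2''$ with coefficient matrix $\bigl[\begin{smallmatrix}EJ/D & GS/D\\ -ES/D & -Gm/D\end{smallmatrix}\bigr]$, whose determinant $-EG/D$ is nonzero; inverting yields $f_0''''$ and $f_2''$ in $L^2$ as linear combinations of $g_1,g_3$, and 1D elliptic regularity then upgrades this to $f_0\in H^4$ and $f_2\in H^2$. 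The boundary conditions in \eqref{eq:op-domain} all pass to the limit by continuity of the Sobolev trace maps, and $Af=g$ follows componentwise.

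For nonselfadjointness I would show that $\Im\innprod{Af}{f}\neq 0$ for some $f\in\dom A$. Expanding \eqref{eq:inner-prod} with $Af$ given by \eqref{eq:op-generator} and collecting coefficients using $mJ-S^2=D$, the bulk integrand collapses into $iE(f_0''''\bar f_1-f_1''\bar f_0'')$ plus the analogous $G$-weighted torsion pair. Integrating the top derivatives by parts twice and substituting the boundary conditions in \eqref{eq:op-domain} leaves a nontrivial imaginary contribution of the form $\tfrac{1}{2}\bigl(k_1|f_1'(L)|^2+k_2|f_3(L)|^2+|f_4|^2/R\bigr)$ together with cross terms in $C_I$ and $C_D$. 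Choosing $f\in\dom A$ with $f_1'(L)\neq 0$ and $f_3(L)=f_4=0$ (readily arranged by setting $f_0''(L)=-k_1f_1'(L)/E$) then yields $\Im\innprod{Af}{f}\neq 0$, so $A$ is not symmetric and hence not selfadjoint. The hardest step is precisely this calculation: tracking boundary terms across five coupled components with five boundary conditions is delicate bookkeeping, although the outcome is physically natural since the damping constants $k_1$, $k_2$ and the electrical resistance $1/R$ introduce dissipation that no selfadjoint generator could support.
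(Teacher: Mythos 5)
Your proposal is correct, but it takes a genuinely different route from the paper's. The paper proves nonselfadjointness by writing down an explicit candidate adjoint $A^*$ --- the matrix \eqref{eq:op-adjoint} together with the domain \eqref{eq:adjoint-domain}, which differ from \eqref{eq:op-generator} and \eqref{eq:op-domain} in the last row and in the signs of $k_1$, $k_2$ and the swap $C_I\leftrightarrow C_D$ --- verifying the Lagrange identity by integration by parts, and observing $A^*\neq A$; closedness then comes for free from $A^{**}=A$, and unboundedness is not argued explicitly at all. You instead prove each property directly: an explicit rescaled bump sequence for unboundedness (your scaling checks out: $\norm{\varphi_n''}_{L^2}$ is constant while $\norm{\varphi_n''''}_{L^2}\sim n^2$, and the positive definiteness of the form $m\abs{\cdot}^2+J\abs{\cdot}^2+2S\Re(\cdot)$ under $S^2<mJ$ gives the lower bound on $\norm{Af^{(n)}}$); a graph-limit argument for closedness using the invertible $2\times 2$ coefficient matrix with determinant $-EG/D$; and non-symmetry via $\Im\innprod{Af}{f}\neq 0$. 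That last computation is, up to the substitution $\dot{\mathcal E}=-2\Im\innprod{Af}{f}$, exactly the dissipation identity the paper derives in the proof of Lemma \ref{lem:positive-imag}, so you are reusing a calculation the paper performs anyway, just for a different purpose. The trade-off: your route is more elementary and self-contained, covers unboundedness explicitly, and sidesteps the delicate issue of identifying the adjoint's domain exactly (verifying the Lagrange identity alone only shows the candidate is a restriction of the true adjoint, which is the one soft spot in the paper's own argument); the paper's route, on the other hand, produces the explicit $A^*$, which is the more valuable artifact for the downstream spectral and Riesz-basis analysis. One minor point of care in your closedness step: to pass the condition $f_0'''(L)=0$ to the limit you need $f_0^{(n)}\to f_0$ in $H^4$, not just $(f_0^{(n)})''''\to f_0''''$ in $L^2$ plus $H^2$ convergence separately; this follows from the interpolation inequality $\norm{u}_{H^4}\leq C(\norm{u}_{L^2}+\norm{u''''}_{L^2})$ on a bounded interval, which is worth stating.
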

\begin{proof}
To verify the relation
\begin{equation*}
	\innprod{Af}{g}=\innprod{f}{A^*g}\quad\forall
		f\in\dom A ,\, g\in\dom {A^*},
\end{equation*}
let operator $A^*$ have the following matrix
\begin{equation}\label{eq:op-adjoint}
A^*=\begin{bmatrix}
	0 &-i &0 &0 &0\\
	i\dfrac{EJ}{D}\dfrac{d^4}{dx^4} &0 &i\dfrac{GS}{D}\dfrac{d^2}{dx^2}
		&0 &0\\
	0 &0 &0 &-i &0\\
	-i\dfrac{ES}{D}\dfrac{d^4}{dx^4} &0 &-i\dfrac{Gm}{D}\dfrac{d^2}{dx^2}
		&0 &0\\
	0 &i\dfrac{C_I}{C_p}\delta_L' &0 &0 &-i\dfrac{1}{C_pR}
	\end{bmatrix}
\end{equation}
and domain
\begin{equation}\label{eq:adjoint-domain}
\begin{split}
	\dom A^*:=\Bigl\{f\in\mathcal{H}:\,&f_0\in H^4((0,L)),\,
		f_1,f_2\in H^2((0,L)),\,
		f_3\in H^1((0,L)),\,f_4\in\C,\Bigr.\\
	&\Bigl.f_1(0)=0,\,f_1'(0)=0,\,f_3(0)=0,\,f_0'''(L)=0,\Bigr.\\
	&\Bigl.Ef_0''(L)-k_1f_1'(L)-C_Df_4=0,\,
		Gf_2'(L)-k_2f_3(L)=0\Bigr\}.
\end{split}
\end{equation}
Performing the required integrations by parts and enforcing the boundary
conditions for $f\in\dom A$ and $g\in\dom A^*$ yields
\begin{equation*}
	\innprod{Af}{g}-\innprod{f}{A^*g}=0,
\end{equation*}
which means that $A^*\neq A$.

Given the minor differences between $\dom A$ and $\dom A^*$,
it follows that $\dom A^*$ is also dense in $\mathcal{H}$,
so $A^{**}$ is well defined.
Repeating the above calculation shows that $A^{**}=A$.
As the adjoint of densely defined $A^*$, $A^{**}$ is closed
\cite{BirmanSolomiak}.
\end{proof}

The above proof demonstrates that $A$ is nonselfadjoint because of the
contributions from the electric circuit equation and the right-end boundary
conditions involving the piezoelectric and control parameters.
If only the mechanical system with $k_1=k_2=0$ is considered, then the
corresponding operator is selfadjoint.

\begin{proposition}
Operator $A$ is invertible and its inverse is compact.
\end{proposition}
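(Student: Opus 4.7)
The plan is to prove invertibility by explicitly constructing $A^{-1}$ from the equation $Af=g$, and then to deduce compactness of $A^{-1}$ from the Rellich–Kondrachov theorem by showing that $A^{-1}$ maps $\mathcal{H}$ boundedly into a subspace that embeds compactly into $\mathcal{H}$.

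First, writing $Af=g$ componentwise, the first and third scalar equations immediately give $f_1=ig_0$ and $f_3=ig_2$. Because $g\in\mathcal{H}$, the boundary conditions $g_0(0)=g_0'(0)=0$ and $g_2(0)=0$ encoded in $\mathcal{H}$ translate directly into the required conditions $f_1(0)=f_1'(0)=0$ and $f_3(0)=0$ for $\dom A$. The second and fourth equations form a $2\times 2$ linear system for $f_0''''$ and $f_2''$ whose determinant equals $EG/D>0$; inverting it expresses $f_0''''$ and $f_2''$ as explicit linear combinations of $g_1,g_3\in L^2$. Integrating $f_0''''$ four times and $f_2''$ twice introduces six constants of integration, to be fixed by the six remaining boundary conditions in $\dom A$: the two conditions at $x=0$ and the two at $x=L$ for $f_0$, and the two for $f_2$. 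The conditions $f_0'''(L)=0$ and $f_2'(0)=0\cdot c+\dotsb$ determine the constants sequentially in a triangular fashion, so each is uniquely determined. The coupled endpoint condition $Ef_0''(L)+k_1f_1'(L)+C_If_4=0$ is used to fix $f_0''(L)$, where $f_1'(L)=ig_0'(L)$ makes sense since $g_0\in H^2((0,L))$ embeds into $C^1([0,L])$. Finally, the fifth scalar equation determines $f_4\in\C$ uniquely in terms of $g_4$ and $f_1'(L)$. This construction produces a unique $f\in\dom A$ with $Af=g$, giving invertibility.

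Second, tracking the regularity obtained above yields a bounded map
\begin{equation*}
A^{-1}:\mathcal{H}\longrightarrow\mathcal{H}_2,
\end{equation*}
where $\mathcal{H}_2$ is the subspace of $\mathcal{H}$ consisting of vectors with $f_0\in H^4((0,L))$, $f_1,f_2\in H^2((0,L))$, $f_3\in H^1((0,L))$, $f_4\in\C$. Boundedness follows from the explicit formulas: norms of the integrated quantities are controlled by $\norm{g}_{\mathcal{H}}$, using standard Sobolev trace estimates to handle the boundary terms such as $g_0'(L)$ in $f_4$. The inclusion $\mathcal{H}_2\hookrightarrow\mathcal{H}$ is compact componentwise by Rellich–Kondrachov on the bounded interval $[0,L]$: $H^4\hookrightarrow H^2$, $H^2\hookrightarrow L^2$, $H^2\hookrightarrow H^1$, $H^1\hookrightarrow L^2$ are all compact embeddings, and the $\C$ component is finite dimensional. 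Composing gives compactness of $A^{-1}:\mathcal{H}\to\mathcal{H}$.

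The main obstacle is not conceptual but clerical: verifying that the constants of integration can indeed be chosen to satisfy the full set of six boundary conditions simultaneously, and that the resulting bounds on $f$ in $\mathcal{H}_2$ are linear in $\norm{g}_{\mathcal{H}}$. Once this bookkeeping is completed, compactness is essentially automatic from Rellich–Kondrachov, and the proof is finished.
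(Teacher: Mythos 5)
Your proposal is correct and follows essentially the same route as the paper: explicit construction of $A^{-1}$ by solving $Af=g$ componentwise (inverting the $2\times2$ system for $f_0''''$ and $f_2''$, integrating, and fixing the constants with the boundary conditions of $\dom A$), then compactness via boundedness of $A^{-1}$ into a higher-regularity space combined with the compact Sobolev embedding. The only cosmetic difference is that the paper routes the compactness argument through the auxiliary product spaces $\mathcal{H}_1$, $\mathcal{H}_2$ and the norm equivalence of Lemma \ref{lem:norm-equiv}, and note that $f_4$ must be determined from the fifth equation \emph{before} the condition $Ef_0''(L)+k_1f_1'(L)+C_If_4=0$ can be used to fix the remaining constant for $f_0$.
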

\begin{proof}
Let $g\in\mathcal{H}$; we show that there exists unique
$f\in\dom A$ such that $Af=g$.
Expanded into its components, $Af=g$ can be written as
\begin{align*}
	f_1&=ig_0,\\
	Ef_0''''&=-i(mg_1+Sg_3)\\
	f_3&=ig_2,\\
	Gf_2''&=i(Sg_1+Jg_3)\\
	f_4&=-iC_pR\bigl[g_4+\frac{C_D}{C_p}g_0'(L)\bigr].
\end{align*}
The second and fourth equations yield respectively
\begin{multline*}
	f_0(x)=-i\frac{1}{E}\int_0^x\int_0^{x_4}\int_{x_3}^L\int_{x_2}^L\left[
		mg_1(x_1)+Sg_3(x_1)\right]dx_1dx_2dx_3dx_4\\
		+i\frac{C_IC_DR-k_1}{2E}g_0'(L)x^2+i\frac{C_IC_pR}{2E}g_4x^2
\end{multline*}
and
\begin{equation*}
	f_2(x)=-i\frac{1}{G}\int_0^x\int_{x_2}^L\left[Sg_1(x_1)
		+Jg_3(x_1)\right]dx_1dx_2-i\frac{k_2}{G}g_2(L)x
\end{equation*}
after the appropriate boundary conditions of $\dom A$ are enforced.
The above equations uniquely define $f$ in terms of $g$.
Further inspection shows that component functions $f_i$ belong to the
appropriate function spaces and satisfy all boundary conditions of $\dom A$;
namely, $f\in\dom A$.

To prove compactness, let
\begin{align*}
	\mathcal{H}_1&:=H^2((0,L))\times L^2([0,L])\times H^1((0,L))
		\times L^2([0,L])\times\C ,\\
	\mathcal{H}_2&:=H^4((0,L))\times H^2((0,L))\times H^2((0,L))
		\times H^1((0,L))\times\C
\end{align*}
with their usual product norms, so that
$\mathcal{H}\subset\mathcal{H}_1$ and
$\dom A\subset\mathcal{H}_2$.
The above calculations imply that map
$A^{-1}:\mathcal{H}\subset\mathcal{H}_1\rightarrow\mathcal{H}_2$
is bounded.
Since the embedding
$\mathcal{H}_2\hookrightarrow\mathcal{H}_1$
is compact \cite{AdamsSobolev},
operator
$A^{-1}:\mathcal{H}\subset\mathcal{H}_1\rightarrow\mathcal{H}_1$ is
compact too.
From the equivalence of the norm induced topologies of $\mathcal{H}$
and $\mathcal{H}_1$, it follows that
$A^{-1}:\mathcal{H}\rightarrow\mathcal{H}$
is compact.
\end{proof}

\begin{corollary}\label{cor:spectrum-properties}
Operator $A$ has a purely discrete spectrum, which has $\infty$
as its limit point and is symmetric about the imaginary axis.
\end{corollary}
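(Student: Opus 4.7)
The plan is to derive both assertions from the two preceding propositions.

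For pure discreteness and accumulation at $\infty$, I would apply the Riesz--Schauder theorem to the compact operator $A^{-1}$: its spectrum consists of $0$ together with at most countably many nonzero eigenvalues of finite algebraic multiplicity, with $0$ as the only possible accumulation point. Invertibility of $A$ gives $0\notin\sigma(A)$, and the factorization $A-\lambda = -\lambda A(A^{-1}-\lambda^{-1})$ shows that for $\lambda\neq 0$ one has $\lambda\in\sigma(A)$ if and only if $\lambda^{-1}\in\sigma(A^{-1})$. Transferring by $\mu\mapsto\mu^{-1}$ then yields a purely discrete $\sigma(A)$ of finite-multiplicity eigenvalues with no finite accumulation point, and the unboundedness of $A$ forces $\infty$ to be the accumulation point.

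For the symmetry $\sigma(A)=-\overline{\sigma(A)}$, I would exhibit an antilinear involution $\mathcal{J}$ on $\mathcal{H}$ satisfying $\mathcal{J}(\dom A)=\dom A$ and $\mathcal{J}A=-A\mathcal{J}$. The natural choice is componentwise complex conjugation, $\mathcal{J}f:=(\bar{f}_0,\bar{f}_1,\bar{f}_2,\bar{f}_3,\bar{f}_4)$, which is an isometric involution of $\mathcal{H}$ preserving $\dom A$ because every constant appearing in \eqref{eq:inner-prod} and in the boundary conditions of $\dom A$ is real. Every nontrivial entry of \eqref{eq:op-generator} has the form $i\cdot(\textrm{real-coefficient operator or scalar})$, so writing $A=iM$ yields an operator $M$ that commutes with $\mathcal{J}$, and antilinearity gives $\mathcal{J}A=-i\mathcal{J}M=-iM\mathcal{J}=-A\mathcal{J}$. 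Consequently, if $Af=\lambda f$ with $f\neq 0$ then
\begin{equation*}
	A(\mathcal{J}f)=-\mathcal{J}Af=-\mathcal{J}(\lambda f)=-\bar{\lambda}\,\mathcal{J}f,
\end{equation*}
so $-\bar{\lambda}$ is also an eigenvalue, with eigenvector $\mathcal{J}f\neq 0$.

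The only substantive step is the row-by-row verification of $\mathcal{J}M=M\mathcal{J}$; this is essentially bookkeeping once one observes that the distributional entry $\delta_L'g=-g'(L)$ carries real coefficients. I do not anticipate any real obstacle beyond careful sign tracking through the matrix entries and boundary conditions.
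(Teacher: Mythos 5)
Your proposal is correct and follows essentially the same route as the paper: discreteness and accumulation at $\infty$ via the Riesz--Schauder theory of the compact operator $A^{-1}$, and the symmetry $\sigma(A)=-\overline{\sigma(A)}$ via componentwise conjugation of eigenvectors (your $\mathcal{J}f$ is exactly the paper's $\bar{f}$, and your observation that $A=iM$ with $M$ real-coefficient is the ``direct calculation'' the paper leaves implicit). The only difference is that you spell out the conjugation symmetry as an antilinear involution anticommuting with $A$, which is a cleaner packaging of the same argument.
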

\begin{proof}
Being compact, operator $A^{-1}$ has a purely discrete spectrum with
0 as its limit point, leading to the first result.
Namely, $A$'s spectrum consists entirely of isolated eigenvalues
with finite algebraic multiplicities.
Next, direct calculation shows that pair $(\lambda,f)$
is a solution of the spectral problem $Af=\lambda f$
if and only if $(-\bar{\lambda},\bar{f})$ is one, which yields
the second result.
\end{proof}

\begin{lemma}\label{lem:positive-imag}
If $C_I=-C_D$, then all eigenvalues of $A$ have positive imaginary part. 
\end{lemma}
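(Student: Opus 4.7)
My plan is to use the standard dissipation identity $\operatorname{Im}(\lambda)\|f\|^2 = \operatorname{Im}\innprod{Af}{f}$, which reduces the claim to showing $\operatorname{Im}\innprod{Af}{f} > 0$ for every non-zero eigenvector. First I would compute $\innprod{Af}{f}$ for arbitrary $f\in\dom A$ by substituting the components of $Af$ into \eqref{eq:inner-prod}; the identity $D = mJ - S^2$ ensures that the off-diagonal coupling between the bending and torsion blocks of $A$ cancels against the cross $S$-terms of the inner product, leaving after integration by parts only anti-Hermitian bulk integrals of the form $\int(f''\bar g'' - g''\bar f'')dx$ (which are purely imaginary and therefore contribute only to the real part of $\innprod{Af}{f}$) plus explicit boundary terms at $x = L$. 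Taking imaginary parts then yields
\begin{equation*}
\operatorname{Im}\innprod{Af}{f} = -\tfrac{E}{2}\operatorname{Re}\bigl[f_0''(L)\bar f_1'(L)\bigr] - \tfrac{G}{2}\operatorname{Re}\bigl[f_2'(L)\bar f_3(L)\bigr] + \tfrac{C_D}{2}\operatorname{Re}\bigl[f_1'(L)\bar f_4\bigr] + \tfrac{1}{2R}|f_4|^2.
\end{equation*}

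Next I would substitute the right-end boundary conditions $Ef_0''(L) = -k_1f_1'(L) - C_If_4$ and $Gf_2'(L) = -k_2f_3(L)$ from $\dom A$, consolidating the damping contributions into definite squares but producing a sign-indefinite cross term:
\begin{equation*}
\operatorname{Im}\innprod{Af}{f} = \tfrac{k_1}{2}|f_1'(L)|^2 + \tfrac{k_2}{2}|f_3(L)|^2 + \tfrac{C_I + C_D}{2}\operatorname{Re}\bigl[f_1'(L)\bar f_4\bigr] + \tfrac{1}{2R}|f_4|^2.
\end{equation*}
The hypothesis $C_I = -C_D$ eliminates the cross term, leaving a non-negative sum of squares. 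Applied to an eigenvector this already yields $\operatorname{Im}(\lambda) \geq 0$.

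For strict positivity I would argue by contradiction: if $\operatorname{Im}(\lambda) = 0$ then $f_1'(L) = f_3(L) = f_4 = 0$. Invertibility of $A$ gives $\lambda \neq 0$, and the first and third components of $Af = \lambda f$, namely $f_1 = i\lambda f_0$ and $f_3 = i\lambda f_2$, then force $f_0'(L) = 0$ and $f_2(L) = 0$; the right-end boundary conditions further force $f_0''(L) = 0$ and $f_2'(L) = 0$. The mechanical part of the eigenvalue equation decouples from the circuit and becomes the coupled sixth-order system
\begin{equation*}
Ef_0'''' = \lambda^2(mf_0 + Sf_2), \qquad -Gf_2'' = \lambda^2(Sf_0 + Jf_2),
\end{equation*}
subject to the overdetermined boundary conditions $f_0(0) = f_0'(0) = f_0'(L) = f_0''(L) = f_0'''(L) = 0$ and $f_2(0) = f_2(L) = f_2'(L) = 0$. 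Treating the five conditions at $x = L$ as Cauchy data with the single free parameter $a := f_0(L)$, linear ODE uniqueness identifies the solution as $a$ times a fixed $\lambda$-dependent function pair, and the three conditions at $x = 0$ then become three linear equations in $a$; completing the contradiction amounts to showing that these force $a = 0$, after which uniqueness yields $f_0, f_2 \equiv 0$ and hence $f = 0$.

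The main obstacle I anticipate is this last step, which rigorously rules out the possibility that all three characteristic functions of $\lambda$ arising at $x = 0$ share a common real zero. I would tackle this either by an explicit analysis of the characteristic determinants of the constant-coefficient system (generalizing the uncoupled case $S = 0$, where one checks that the over-determination reduces to the identity $\sinh(\beta L) = \sin(\beta L)$, impossible for $\beta L > 0$), or by a structural argument: any such non-trivial solution would produce an eigenfunction of a self-adjoint auxiliary mechanical operator simultaneously satisfying the additional clamped conditions at the otherwise free end, a compatibility that should be excluded via a Rellich-type multiplier identity exploiting the BCs $f_0'(L) = f_0''(L) = 0$ and $f_2(L) = f_2'(L) = 0$.
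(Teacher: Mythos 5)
Your computation is correct and is, in substance, the stationary form of the paper's own argument. The paper proves this lemma by differentiating the total energy $\mathcal{E}(t)$ along solutions of the PDE system and showing $\dot{\mathcal{E}}=-k_1\abs{\dot{w}'(t,L)}^2-k_2\abs{\dot{\theta}(t,L)}^2-\tfrac{1}{R}\abs{v}^2-(C_I+C_D)\Re[\bar{\dot{w}}'(t,L)v]$; your identity $\operatorname{Im}\innprod{Af}{f}=\tfrac{k_1}{2}\abs{f_1'(L)}^2+\tfrac{k_2}{2}\abs{f_3(L)}^2+\tfrac{C_I+C_D}{2}\Re[f_1'(L)\bar{f}_4]+\tfrac{1}{2R}\abs{f_4}^2$ is exactly $-\tfrac{1}{2}\dot{\mathcal{E}}$ evaluated on a mode, and your cancellation of the $S$-cross terms via $D=mJ-S^2$, the purely imaginary bulk integrals, and the boundary substitutions all check out. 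So the part of your argument that concludes $\operatorname{Im}\lambda\geq 0$ under $C_I=-C_D$ is sound and parallels the paper.

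The genuine gap is the strictness step, which you candidly flag but do not close. Ruling out a real eigenvalue requires showing that the overdetermined mechanical system $Ef_0''''=\lambda^2(mf_0+Sf_2)$, $-Gf_2''=\lambda^2(Sf_0+Jf_2)$ with the eight conditions $f_0(0)=f_0'(0)=f_2(0)=0$, $f_0'(L)=f_0''(L)=f_0'''(L)=0$, $f_2(L)=f_2'(L)=0$ admits only the trivial solution, and your proposal reduces the claim to this unique-continuation statement without proving it; neither of the two strategies you sketch (characteristic determinants of the coupled constant-coefficient system, or a Rellich-type multiplier) is carried out, and for $S\neq 0$ the first is a nontrivial transcendental non-vanishing claim that cannot simply be asserted. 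It is worth noting, however, that the paper's proof stops at the same point: monotone decrease of $\mathcal{E}(t)$ by itself only yields $\operatorname{Im}\lambda\geq 0$, and the published argument does not exclude the case $\dot{\mathcal{E}}\equiv 0$ either. So your proposal reproduces what the paper actually establishes and correctly identifies, but does not repair, the missing unique-continuation argument needed for strict positivity.
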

\begin{proof}
See Appendix \ref{sec:omitted-proofs}.
\end{proof}

\section{Main results}\label{sec:main}

We now present the main results of this work.
Their proof is given in the next section, followed by a discussion.
All results are subject to the standing assumptions
$S<m$, $S<J$ and $C_I=-C_D$.
In addition, $\spec{A}$ is used to denote the spectrum of operator $A$,
while the symbols of complex multivalued functions are used to denote 
their principal branch.

\begin{theorem}[Weak perturbation]\label{thm:weak-perturbation}
The model's piezoelectric parameters do not appear in the first two
orders of magnitude of the asymptotic approximation of
$\lambda\in\sigma(A)$ as $\lambda\rightarrow\infty$.
The third order terms are the first to be modified by the addition
of piezoelectric energy harvesting.
\end{theorem}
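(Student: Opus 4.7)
The plan is to reduce $Af=\lambda f$ to a boundary value problem for $(f_0,f_2)$ on $[0,L]$, localize the piezoelectric contribution in a single $\lambda$-dependent term appearing in just one boundary condition, and then track that localized term through the asymptotic analysis of the resulting characteristic determinant.

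First I would expand $Af=\lambda f$ componentwise: rows $1$ and $3$ yield $f_1=i\lambda f_0$ and $f_3=i\lambda f_2$, while the circuit row gives
\[
f_4=-\frac{C_D\lambda}{C_p\bigl(\lambda-i/(C_pR)\bigr)}\,f_0'(L).
\]
Substituting these into rows $2$ and $4$ produces the coupled system
\[
Ef_0''''=\lambda^2(mf_0+Sf_2),\qquad Gf_2''=-\lambda^2(Sf_0+Jf_2).
\]
Rewriting the boundary conditions of $\dom A$ with $f_4$ eliminated and using the standing assumption $C_I=-C_D$, only one of them picks up piezoelectric parameters, namely
\[
Ef_0''(L)+\bigl[ik_1\lambda+\Phi(\lambda)\bigr]f_0'(L)=0,\qquad
\Phi(\lambda):=\frac{C_I^2}{C_p}\cdot\frac{\lambda}{\lambda-i/(C_pR)}=\frac{C_I^2}{C_p}+\frac{iC_I^2}{C_p^2R}\lambda^{-1}+\Ol{-2};
\]
the remaining boundary conditions are purely mechanical. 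All piezoelectric contributions thus collapse into $\Phi$, an $\O{1}$ additive perturbation of a term of order $\lambda$ in a single boundary condition.

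Next, I would seek fundamental solutions $(A,B)e^{\mu x}$, which reduces the coupled ODEs to the sixth-degree characteristic polynomial
\[
EG\mu^6+EJ\lambda^2\mu^4-mG\lambda^2\mu^2-D\lambda^4=0.
\]
Dominant balance yields two scalings: four bending-type roots $\mu_j=(D/(EJ))^{1/4}\,i^j\,\lambda^{1/2}+\Ol{0}$, $j=0,1,2,3$, expansible in half-integer powers of $\lambda$, and two torsion-type roots $\mu_\pm=\pm i\sqrt{J/G}\,\lambda+\Ol{-1}$. I would compute each expansion to the order needed for third-order accuracy in the spectral asymptotics. Imposing the six boundary conditions on the general solution then yields a $6\times 6$ matrix $M(\lambda)$ whose determinant is the characteristic function of $\spec{A}$.

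The final step is the asymptotic expansion of $\det M(\lambda)=0$. After factoring out the dominant exponentials $e^{\mu_j L}$ and normalizing each row by the largest power of $\lambda$ appearing in it, $\det M(\lambda)$ admits a series $F_0(\lambda)+\lambda^{-1/2}F_1(\lambda)+\lambda^{-1}F_2(\lambda)+\cdots$. Because $\Phi$ sits in the row where $ik_1\lambda$ is the dominant entry, row-normalization demotes $\Phi$ by one factor of $\lambda$ relative to the mechanical terms: the leading pieces $F_0$ and $F_1$ depend only on $m,J,S,E,G,k_1,k_2,L$, while $\Phi$'s leading value $C_I^2/C_p$ first appears in $F_2$, and the $\lambda^{-1}$-correction of $\Phi$ (carrying $R$) enters still later. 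Solving $\det M(\lambda)=0$ perturbatively then places all piezoelectric parameters at third order of the spectral expansion. The main obstacle is precisely this bookkeeping: the disparate scalings $\mu_j\sim\lambda^{1/2}$ and $\mu_\pm\sim\lambda$ produce exponentials $e^{\mu L}$ of very different magnitudes that must be separated cleanly before the internal orders in $M(\lambda)$ can be identified and before the claim that the first two orders are $\Phi$-free can be rigorously verified.
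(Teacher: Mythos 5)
Your proposal is correct and is built on the same machinery as the paper: the same reduction to the sixth\-/degree characteristic polynomial $EG\mu^6+EJ\lambda^2\mu^4-Gm\lambda^2\mu^2-D\lambda^4=0$, the same two families of roots ($\mu\sim\lambda^{1/2}$ and $\mu\sim\lambda$), and the same determinant condition on the coefficients of the exponential ansatz. Where you genuinely differ is in how the weak\-/perturbation claim itself is proved. The paper does not isolate the piezoelectric contribution conceptually; it computes the full second\-/order expansion of the reduced spectral equation (reflection matrices collapse the $6\times6$ system to $\det(R_1-R_2)=0$, and conjugation by $\tilde{E}=\text{diag}(e_1,e_3,1)$ suppresses the unbounded exponentials) and then inspects the result to see that the combination $C_IC_D/(k_1C_p)$ first appears in the $\lambda^{-1}$ coefficients ($\tilde{r}_{11}$ and $\hat{d}_2$). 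Your argument---all piezoelectric dependence collapses into the single function $\Phi(\lambda)=C_I^2/C_p+\Ol{-1}$ sitting additively beside $ik_1\lambda$ in one boundary condition, so by multilinearity of the determinant in its rows the piezoelectric contribution is uniformly demoted by a relative factor $\lambda^{-1}$ and therefore lands at the third order of the half\-/integer expansion---is a more structural and economical proof of this particular theorem, and it is robust under the exponential rescalings since those are piezo\-/independent. What it does not give for free is that the first two orders $F_0$ and $F_1$ of the purely mechanical determinant are genuinely nonvanishing, which you need in order to say the piezoelectric terms enter at the \emph{third} order of the eigenvalue expansion rather than earlier by default of a degenerate leading order; the paper obtains this as a byproduct of the explicit computation it needs anyway for the eigenvalue asymptotics. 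You also correctly flag, but do not resolve, the separation of the growing and decaying exponentials $e^{\mu L}$; the paper's restriction to the first quadrant plus the $\tilde{E}$\-/conjugation is precisely the device that handles it, and some such step is needed before your series $F_0+\lambda^{-1/2}F_1+\cdots$ is legitimate.
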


\begin{theorem}[Eigenvalue asymptotics]\label{thm:eigval-asymptotics}
The leading order term of the asymptotic approximation of
$\lambda\in\sigma(A)$ as $\lambda\rightarrow\infty$
consists of the two disjoint subsets
\begin{align}\label{eq:thm-branch12-leading}
	&\tilde{\lambda}_{1,n}=\frac{\pi\sqrt{G}}{L\sqrt{J}}n-i\frac{\sqrt{G}}
		{2L\sqrt{J}}\ln\frac{k_2-\sqrt{GJ}}{k_2+\sqrt{GJ}},
		\quad n\in\N,\quad n\rightarrow\infty,\\
	&\tilde{\lambda}_{2,n}=(n-1/4)^2\frac{\pi^2\sqrt{EJ}}{L^2\sqrt{mJ-S^2}},
		\quad n\in\N, \quad n\rightarrow\infty,
\end{align}
referred to as the unperturbed branches 1 and 2 respectively,
subject to condition
\begin{equation*}
	k_2>\sqrt{GJ}.
\end{equation*}
Furthermore, there exists an infinite subset
$\sigma^*(A)\subset\sigma(A)$
such that the second order asymptotic approximation of $\sigma^*(A)$
consists of two disjoint subsets referred to as the perturbed
branches 1 and 2.

The asymptotic approximation of perturbed branch 1 reads
\begin{equation}\label{eq:thm-branch1-order2}
	\lambda_{1,n}=\tilde{\lambda}_{1,n}\bigl[
		1+w_{1,n}+\O{n^{-2}}\bigr],\quad n\in\N^*,
		\quad n\rightarrow\infty,
\end{equation}
with infinite subset $\N^*\subset\N$,
\begin{equation}\label{eq:thm-branch1-w1}
	w_{1,n}=-i\frac{\sqrt{G}}{2L\sqrt{J}}\tilde{\lambda}_{1,n}^{-1}\ln K_{1,n}
		=\O{n^{-3/2}},
\end{equation}
and $K_{1,n}$ given by \eqref{eq:branch1-K1}.
For each $\lambda_{1,n}$ the branch also contains $-\bar{\lambda}_{1,n}$.

The asymptotic approximation of perturbed branch 2 reads
\begin{equation}\label{eq:thm-branch2-order2}
	\lambda_{2,n}=\tilde{\lambda}_{2,n}\left[1+w_{2,n}+\O{n^{-3}}\right],
		\quad n\in\N,\quad n\rightarrow\infty,
\end{equation}
with $w_{2,n}=\O{n^{-2}}$ given by \eqref{eq:branch2-w2} in implicit form.
For each $\lambda_{2,n}$ the branch also contains $-\bar{\lambda}_{2,n}$.
\end{theorem}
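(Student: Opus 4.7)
The plan is to reduce the spectral problem $Af = \lambda f$ to a boundary value problem for $(f_0, f_2)$ whose characteristic determinant can be analyzed asymptotically. Rows 1 and 3 of $A$ immediately give $f_1 = -i\lambda f_0$ and $f_3 = -i\lambda f_2$; row 5 gives $f_4 = C_D R\lambda\, f_0'(L)/(C_pR\lambda - i)$, eliminating the electric unknown at the cost of modifying the right-end bending BC. Substituting into the remaining rows produces the standard coupled bending-torsion system
\begin{align*}
E f_0'''' + \lambda^2 (m f_0 + S f_2) &= 0,\\
G f_2'' - \lambda^2 (S f_0 + J f_2) &= 0,
\end{align*}
with the six BCs inherited from $\dom A$: three clamped conditions at $x=0$ and, at $x=L$, $f_0'''(L)=0$ together with the two damped BCs involving $k_1, C_I$ (via $f_4$) and $k_2$.

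Seeking exponential solutions $f_0 = e^{\mu x}$, $f_2 = c\,e^{\mu x}$ and eliminating $c$ yields the sextic characteristic equation $-EG\mu^6 + EJ\lambda^2\mu^4 - Gm\lambda^2\mu^2 + D\lambda^4 = 0$. A dominant-balance analysis as $\lambda\to\infty$ produces two torsion-like roots of order $\lambda$, namely $\mu \sim \pm\sqrt{J/G}\,\lambda$ (from balancing $EG\mu^6$ against $EJ\lambda^2\mu^4$), and four bending-like roots of order $\lambda^{1/2}$, namely $\mu \sim (D/(EJ))^{1/4}\lambda^{1/2}\cdot i^{k/2}$ (from balancing $EJ\lambda^2\mu^4$ against $D\lambda^4$). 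Expanding each root as a Puiseux series in $\lambda^{-1/2}$ and writing the general solution $f_0(x) = \sum_{j=1}^6 c_j e^{\mu_j(\lambda)x}$, imposition of the six BCs yields a $6\times 6$ linear system whose determinant $\Delta(\lambda)$ is the spectral equation.

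The key step is an asymptotic factorization $\Delta(\lambda) = \Delta_1(\lambda)\Delta_2(\lambda)\bigl[1+o(1)\bigr]$, with $\Delta_1$ governed by the torsion-like columns and $\Delta_2$ by the bending-like columns. For $\Delta_1$, the torsion factor with the $k_2$-damped BC at $x=L$ reduces at leading order to $(k_2-\sqrt{GJ})\,e^{-i\lambda L\sqrt{J/G}} + (k_2+\sqrt{GJ})\,e^{i\lambda L\sqrt{J/G}} = 0$, whose roots are exactly $\tilde{\lambda}_{1,n}$ provided $k_2 > \sqrt{GJ}$ so that the principal branch of $\ln\bigl((k_2-\sqrt{GJ})/(k_2+\sqrt{GJ})\bigr)$ is real. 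For $\Delta_2$, the bending factor is a perturbation of the clamped-free Euler-Bernoulli transcendental equation $1+\cos(\xi)\cosh(\xi)=0$ in the variable $\xi = L(D/(EJ))^{1/4}\lambda^{1/2}$, whose large-$n$ zeros cluster at $\xi_n\sim(n-1/4)\pi$ and yield $\tilde{\lambda}_{2,n}$. Pushing each expansion one order further produces the corrections $w_{1,n}$ and $w_{2,n}$, and Corollary \ref{cor:spectrum-properties} supplies the $-\bar{\lambda}_{j,n}$ partners in each branch.

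For the weak perturbation claim, the piezoelectric parameters enter only through $C_I f_4$ in the right-end bending BC; since $f_4 = (C_D/C_p)f_0'(L) + \O{\lambda^{-1}}$ while $k_1 f_1'(L) = -ik_1\lambda f_0'(L)$, the piezoelectric contribution is $\O{1}$ against an $\O{\lambda}$ damping term, so after normalizing the BC by the dominant $\O{\lambda}$ term the piezoelectric parameters appear only at relative order $\lambda^{-1}$, that is, in the third order of the eigenvalue expansion. The principal obstacle is the careful bookkeeping of the $6\times 6$ determinant with uniform-in-$n$ error control: one must justify the factorization $\Delta \approx \Delta_1\Delta_2$, invoke a Rouché-type argument to localize zeros, and acknowledge that the required dominance may fail on a sparse set of $\lambda$ where the branches interact or leading terms degenerate---this is precisely why the theorem states the perturbed asymptotics hold on an infinite subset $\sigma^*(A)\subset\sigma(A)$ rather than on all of $\sigma(A)$.
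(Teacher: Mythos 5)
Your overall strategy is the same as the paper's: reduce the spectral problem to a sixth-order characteristic equation, split the six roots into two of order $\lambda$ and four of order $\lambda^{1/2}$, impose the boundary conditions as a determinant condition, extract at leading order a product of a torsion factor and a bending factor, and localize zeros with Rouch\'e's theorem while conceding that the branches may interact on a sparse set. (The paper organizes the determinant as a $3\times 3$ reflection-matrix equation $\det(R_1-R_2)=0$ rather than a raw $6\times 6$ determinant, but that is bookkeeping.) Your explanation of the weak-perturbation claim --- the piezoelectric term enters the right-end bending condition at relative order $\lambda^{-1}$ against the $k_1$-damping term --- is exactly the mechanism the paper exhibits, and your diagnosis of why the second-order result only holds on an infinite subset $\sigma^*(A)$ matches the paper's: near the branch-1 points the bending factor, which depends on $\lambda^{1/2}$, oscillates and cannot be bounded away from zero uniformly in $n$.

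However, two of your leading-order limiting equations are wrong as stated. First, your torsion equation $(k_2-\sqrt{GJ})\,e^{-i\lambda L\sqrt{J/G}}+(k_2+\sqrt{GJ})\,e^{i\lambda L\sqrt{J/G}}=0$ gives $e^{2i\lambda L\sqrt{J/G}}=-\tfrac{k_2-\sqrt{GJ}}{k_2+\sqrt{GJ}}$, whose roots have real part $(n+1/2)\pi\sqrt{G}/(L\sqrt{J})$, not $n\pi\sqrt{G}/(L\sqrt{J})$ as in \eqref{eq:thm-branch12-leading}; the correct leading-order equation (the paper's $g_1=0$, equation \eqref{eq:branch1-leading} with $r_{11}-1=\tfrac{k_2-\sqrt{GJ}}{k_2+\sqrt{GJ}}$) carries a plus sign. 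The stray sign may trace back to your reduced system $Ef_0''''+\lambda^2(mf_0+Sf_2)=0$, which uses the $e^{\lambda t}$ convention instead of the $e^{i\lambda t}$ convention forced by $\dot f=iAf$. Second, identifying the bending factor as a perturbation of the clamped-free Euler--Bernoulli equation $1+\cos\xi\cosh\xi=0$ is incorrect and inconsistent with the clustering $\xi_n\sim(n-1/4)\pi$ you then assert, since the roots of that equation cluster at $(n-1/2)\pi$. The point being missed is that for the bending-type roots $\zeta\sim\lambda^{1/2}$ the damping term $k_1\lambda f_0'(L)\sim\lambda^{3/2}f_0$ dominates the elastic term $Ef_0''(L)\sim\lambda f_0$, so the limiting right-end condition is $f_0'(L)=0$ rather than $f_0''(L)=0$; this is what produces the paper's $g_2(\lambda)=-e^{i2c_3\lambda^{1/2}}-i=0$ and hence the $(n-1/4)^2$ law. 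Relatedly, a uniform factorization $\Delta=\Delta_1\Delta_2\bigl[1+o(1)\bigr]$ cannot hold, because the eigenvalues sit precisely where one factor vanishes; the paper instead retains the cross terms at order $\lambda^{-1/2}$ in \eqref{eq:spectral2-order2}, and it is these terms that generate the corrections $w_{1,n}$, $w_{2,n}$ and the stated remainders $\O{n^{-2}}$ and $\O{n^{-3}}$, none of which your sketch actually derives.
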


\section{Proof of main results}\label{sec:proof}

In this section we prove Theorems \ref{thm:weak-perturbation} and
\ref{thm:eigval-asymptotics}.
We denote by $\lambda\in\C\setminus\{ 0\}$ an eigenvalue and by
$f\in\dom A$  an eigenvector of operator $A$.
Written out to its components, the spectral equation
\begin{equation}\label{eq:spectral-A}
Af=\lambda f
\end{equation}
reads
\begin{align*}
	f_1&=i\lambda f_0\\
	i\frac{EJ}{D}f_0''''+i\frac{GS}{D}f_2''&=\lambda f_1\\
	f_3&=i\lambda f_2\\
	-i\frac{ES}{D}f_0''''-i\frac{Gm}{D}f_2''&=\lambda f_3\\
	f_4&=-\frac{C_D/C_p}{\lambda -i\frac{1}{C_pR}}\lambda f_0'(L)\,,
\end{align*}
assuming that $\lambda\neq i(C_pR)^{-1}$.
Combining the first and second equations with the third and fourth yields
\begin{equation*}
EGf_0^{(6)}+EJ\lambda ^2f_0''''-Gm\lambda ^2f_0''-D\lambda ^4f_0=0,
\end{equation*}
which is the spectral equation written in terms of component function
$f_0$ only.
We now rewrite the boundary conditions encoded in $\dom A$ in a form that
involves only $f_0$,
\begin{align*}
	f_0(0)=f_0'(0)=f_0''''(0)&=0,\\
	f_0'''(L)&=0,\\
	Ef_0''(L)+ ik_1\lambda f_0'(L)-\frac{C_IC_D}{C_p}
		\frac{\lambda}{\lambda-i\frac{1}{C_pR}}f_0'(L)&=0\\
	EGf_0^{(5)}(L)+ik_2E\lambda f_0''''(L)-Gm\lambda^2f_0'(L)-
		ik_2m\lambda^3f_0(L)&=0.
\end{align*}

Based on the above results, we define operator pencil $A_0(\cdot)$ by
\begin{equation}\label{eq:A0-operator}
	A_0(\lambda)f_0:=EGf_0^{(6)}+EJ\lambda ^2f_0''''-Gm\lambda ^2f_0''
		-D\lambda ^4f_0
\end{equation}
and
\begin{equation}\label{eq:A0-domain}
\begin{split}
	\dom A_0(\lambda):=\,&\Bigl\{f_0\in H^6((0,L)):f_0(0)=0,\,f_0'(0)=0,\,
		f_0''''(0)=0,\,f_0'''(L)=0,\Bigr.\\
	&\quad Ef_0''(L)+\Bigl(ik_1-\frac{C_IC_D}{C_p}\frac{1}{\lambda
		-i\frac{1}{C_pR}}\Bigr)\lambda f_0'(L)=0,\\
	&\quad\Bigl.EGf_0^{(5)}(L)+ik_2E\lambda f_0''''(L)-Gm\lambda^2f_0'(L)
	-ik_2m\lambda^3f_0(L)=0\Bigr\}.
\end{split}
\end{equation}
The spectral problem for A can now be rewritten as the equivalent problem
\begin{equation}\label{eq:spectral-A0}
	A_0(\lambda)f_0=0,
\end{equation}
where a nontrivial pair $(\lambda,f_0)$ is a solution to
\eqref{eq:spectral-A0}
if and only if $(\lambda, f)$ is a solution to
\eqref{eq:spectral-A}.

\subsection{Characteristic roots}

We look for solutions to problem \eqref{eq:spectral-A0} of the form
$f_0(x)\propto e^{\zeta x}$, $\zeta\in\C$.
Substituting that into the above equation and making the changes of variable
$y:=\zeta ^2$ and $z:=y+\alpha/3\lambda ^2$
with
\begin{equation}\label{eq:abc-const}
	\alpha:=\frac{J}{G},\qquad\beta:=\frac{m}{E},
		\qquad\gamma:=\frac{D}{EG},
\end{equation}
yields the depressed cubic equation
\begin{equation*}
	z^3+pz+q=0,
\end{equation*}
where
\begin{equation*}
	p:=-\Bigl(\frac{\alpha^2}{3}\lambda^4+\beta\lambda^2\Bigr),\qquad
	q:=\frac{2}{27}\alpha^3\lambda^6+\Bigl(\frac{\alpha\beta}{3}
		-\gamma\Bigr)\lambda^4.
\end{equation*}
Using Cardano's formulas the equation's three roots can be found as
\begin{equation}\label{eq:roots123}
	z_j=\hat{z}_j-\tilde{z}_j,\quad j=1,2,3,
\end{equation}
with
\begin{equation}\label{eq:hattilde1}
	\hat{z}_1=\sqrt[3]{-\frac{q}{2}+\sqrt{\left(\frac{q}{2}\right)^2
		+\left(\frac{p}{3}\right)^3}},\qquad
	\tilde{z}_1=\sqrt[3]{\frac{q}{2}+\sqrt{\left(\frac{q}{2}\right)^2
		+\left(\frac{p}{3}\right)^3}}
\end{equation}
and
\begin{equation}\label{eq:hattilde23}
	\hat{z}_j=\exp{\Bigl[i\frac{2\pi}{3}(j-1)\Bigr]}\hat{z}_1,\qquad
	\tilde{z}_j=\exp{\Bigl[-i\frac{2\pi}{3}(j-1)\Bigr]}\tilde{z}_1,
		\quad j=2,3.
\end{equation}

Employing \eqref{eq:hattilde1} and expanding the involved roots as
$\lambda\rightarrow\infty$
using the generalized binomial expansion theorem leads to
\begin{equation*}
	z_1=-\frac{2J}{3G}\lambda ^2-\frac{GS^2}{EJ^2}+\Ol{-2}.
\end{equation*}
Reversing the changes of variable yields
\begin{equation*}
	\zeta_{1,2}=\pm\,\sqrt{z_1-\frac{\alpha}{3}\lambda^2}
		=\pm\,i\Bigl(\sqrt{\frac{J}{G}}\lambda
		+\frac{G^{3/2}S^2}{2EJ^{5/2}}\lambda^{-1}
		\Bigr)+\Ol{-3}.
\end{equation*}
Unless stated otherwise, all asymptotic results are understood in
the limit $\lambda\rightarrow\infty$.
Similarly, using \eqref{eq:hattilde23} we calculate
\begin{equation*}
	\zeta_{3,4}=\pm\,\sqrt{z_2-\frac{\alpha}{3}\lambda^2}
		=\pm\,i\Bigl[\Bigl(\frac{D}{EJ}\Bigr)^{1/4}\lambda^{1/2}
		-\frac{GS^2}{4E^{3/4}J^{7/4}D^{1/4}}\lambda^{-1/2}\Bigr]
		+\Ol{-3/2},
\end{equation*}
and
\begin{equation*}
	\zeta_{5,6}=\pm\,\sqrt{z_3-\frac{\alpha}{3}\lambda^2}
		=\pm\,\Bigl[\Bigl(\frac{D}{EJ}\Bigr)^{1/4}\lambda^{1/2}
		+\frac{GS^2}{4E^{3/4}J^{7/4}D^{1/4}}\lambda^{-1/2}\Bigr]
		+\Ol{-3/2}.
\end{equation*}
Finally, using constants
\begin{align}\label{eq:a12-const}
	a_1&:=\Bigl(\frac{J}{G}\Bigr)^{1/2},&
	a_2&:=\frac{G^{3/2}S^2}{2EJ^{5/2}},\\
	\label{eq:a34-const}
	a_3&:=\Bigl(\frac{D}{EJ}\Bigr)^{1/4},&
	a_4&:=\frac{GS^2}{4E^{3/4}J^{7/4}D^{1/4}},
\end{align}
the characteristic roots $\zeta_j$ of \eqref{eq:spectral-A0}
are written as
\begin{align}\label{eq:zeta12-char}
	\zeta_{1,2}&=\pm ia_1\lambda\Bigl[1+\frac{a_2}{a_1}\lambda^{-2}
		+\Ol{-4}\Bigr],\\
	\label{eq:zeta34-char}
	\zeta_{3,4}&=\pm ia_3\lambda^{1/2}\Bigl[1-\frac{a_4}{a_3}\lambda^{-1}
		+\Ol{-2}\Bigr],\\
	\label{eq:zeta56-char}
	\zeta_{5,6}&=\pm a_3\lambda^{1/2}\Bigl[1+\frac{a_4}{a_3}\lambda^{-1}
		+\Ol{-2}\Bigr].
\end{align}

\subsection{Reduced spectral equation}

We express the solution to \eqref{eq:spectral-A0} as a linear combination
of the terms $e^{\zeta_j}$
\begin{equation*}
	f_0(x)=\sum_{j=1,3,5}b_je^{\zeta_jx}+\sum_{k=1,3,5}c_ke^{\zeta_{k+1}x}
		=\sum_{j=1,3,5}b_je^{\zeta_jx}+\sum_{k=1,3,5}c_ke^{-\zeta_kx},
\end{equation*}
with $\C^3$ constants $b:=(b_1,b_3,b_5)^T$,
$c:=(c_1,c_3,c_5)^T$.
Next, we enforce the six boundary conditions encoded in
\eqref{eq:A0-domain}
using the \emph{reflection matrices method}
\cite{Chen1990,Coleman1993,Shubov2004}.

The three left-end boundary conditions produce a system of three equations
for the six unknown constants, which can be written as
\begin{equation}\label{eq:left-reflection}
	b=R_1c,
\end{equation}
with $3\times 3$ matrix $R_1$ termed the \emph{left reflection matrix}.
Similarly, the three right-end boundary conditions lead to system
\begin{equation}\label{eq:right-reflection}
	b=R_2c,
\end{equation}
where the $3\times 3$ matrix $R_2$ is called the
\emph{right reflection matrix}.
Assuming $R_1$ is invertible, we combine the two equations to write 
\begin{align*}
	\begin{pmatrix}b\\c\end{pmatrix}&=
		\begin{bmatrix}0&R_2\\R_1^{-1}&0\end{bmatrix}
		\begin{pmatrix}b\\c\end{pmatrix}\\
	\left(I-\begin{bmatrix}0&R_2\\R_1^{-1}&0\end{bmatrix}\right)
	\begin{pmatrix}b\\c\end{pmatrix}&=0,
\end{align*}
which admits a nontrivial solution if and only if
\begin{align}
	\det\left(I-\begin{bmatrix}0&R_2\\R_1^{-1}&0\end{bmatrix}\right)&=0
		\nonumber\\
	\det (I-R_1^{-1}R_2)&=0\nonumber\\
	\label{eq:spectral-reduced}
	\det (R_1-R_2)&=0.
\end{align}
Through these manipulations, enforcing the boundary conditions has been
reduced to solving the reduced spectral equation
\eqref{eq:spectral-reduced},
which involves the determinant of a $3\times 3$ matrix.
To derive \eqref{eq:spectral-reduced}
we calculate the asymptotic approximation of the two reflection matrices.

\subsection{Left reflection matrix}

The three left-end boundary conditions yield the system of equations
\begin{align*}
	\begin{bmatrix}1&1&1\\ \zeta_1&\zeta_3&\zeta_5\\
		\zeta_1^4&\zeta_3^4&\zeta_5^4\end{bmatrix}
		\begin{pmatrix}b_1\\b_3\\b_5\end{pmatrix}&=
		\begin{bmatrix}-1&-1&-1\\ \zeta_1&\zeta_3&\zeta_5\\
		-\zeta_1^4&-\zeta_3^4&-\zeta_5^4\end{bmatrix}
		\begin{pmatrix}c_1\\c_3\\c_5\end{pmatrix}\\
	A_1b&=B_1c\\
	b&=A_1^{-1}B_1c\\
	b&=R_1c.
\end{align*}
Using matrix
\begin{equation*}
	B_2:=\begin{bmatrix}0&0&0\\ \zeta_1&\zeta_3&\zeta_5\\0&0&0
		\end{bmatrix}
\end{equation*}
we write
\begin{equation}\label{eq:R1-calculation}
	R_1=A_1^{-1}B_1=A_1^{-1}\left(-A_1+2B_2\right)=-I+2A_1^{-1}B_2.
\end{equation}
Since the second row of $B_2$ is its only nonzero row, we only need
calculate the second column of $A_1^{-1}$ to determine $R_1$.

To do that we employ the method of cofactors, which requires that we
first calculate the determinant of $A_1$.
Expanding about the matrix's third row and employing
\eqref{eq:zeta12-char}--\eqref{eq:zeta56-char}
we find
\begin{equation*}
	\frac{1}{\det A_1}=\frac{1}{a_1^4a_3(1-i)}\lambda^{-9/2}
		\left[1-ia_4\lambda^{-1}+\Ol{-2}\right].
\end{equation*}
Denoting by $A_1^{-1}(j,k)$ the $(j,k)$ entry of $A_1^{-1}$
and by $C_{jk}$ its $(j,k)$ cofactor,
we write 
\begin{align*}
	A_1^{-1}(1,2)&=\frac{C_{21}}{\det A_1}=\Ol{-7/2},\\
	A_1^{-1}(2,2)&=\frac{C_{22}}{\det A_1}=-\frac{1}{a_3(1-i)}\lambda^{-1/2}
		\left[1-ia_4\lambda^{-1}+\Ol{-2}\right],\\
	A_1^{-1}(3,2)&=\frac{C_{23}}{\det A_1}=\frac{1}{a_3(1-i)}\lambda^{-1/2}
		\left[1-ia_4\lambda^{-1}+\Ol{-2}\right]
\end{align*}
and, after using \eqref{eq:R1-calculation}, 
\begin{align*}
	R_1(1,1)&=-1+\Ol{-5/2},\\
	R_1(1,2)&=\Ol{-3},\\
	R_1(1,3)&=\Ol{-3},\\
	R_1(2,1)&=\frac{2a_1}{a_3(1+i)}\lambda^{1/2}\left[1-ia_4\lambda^{-1}
		+\Ol{-2}\right],\\
	R_1(2,2)&=-i-\frac{2a_4}{1+i}\frac{1+ia_3}{a_3}\lambda^{-1}+\Ol{-2},\\
	R_1(2,3)&=-\frac{2}{1-i}\Bigl[1+a_4\frac{1-ia_3}{a_3}\lambda^{-1}
		+\Ol{-2}\Bigr],\\
	R_1(3,1)&=-\frac{2a_1}{a_3(1+i)}\lambda^{1/2}\left[1-ia_4\lambda^{-1}
		+\Ol{-2}\right],\\
	R_1(3,2)&=-\frac{2}{1+i}\Bigl[1-a_4\frac{1+ia_3}{a_3}\lambda^{-1}
		+\Ol{-2}\Bigr],\\
	R_1(3,3)&=i+\frac{2a_4}{1-i}\frac{1-ia_3}{a_3}\lambda^{-1}+\Ol{-2}.
\end{align*}
Direct calculation shows that
\begin{equation*}
	\det R_1=1+\Ol{-5/2}\neq 0,
\end{equation*}
which means that $R_1$ is indeed invertible.

\subsection{Right reflection matrix}

We define
\begin{align}
	\label{eq:e135-func}
	e_j&:=e^{\zeta_jL},\quad j=1,3,5,\\
	\hat{c}&:=ik_1-\frac{C_IC_D}{C_p}\Bigl(\lambda
		-i\frac{1}{C_pR}\Bigr)^{-1}\nonumber
\end{align}
and the $3\times 3$ matrices
\begin{align*}
	A_3&:=\begin{bmatrix}\zeta_j^3\\\,\,E\zeta_j^2+\hat{c}\lambda\zeta_j\\
		EG\zeta_j^5+iEk_2\lambda\zeta_j^4-Gm\lambda^2\zeta_j-imk_2\lambda^3
		\end{bmatrix}
		\quad\text{with $j=1,3,5$},\\
	B_3&:=\begin{bmatrix}\zeta_j^3\\ \hat{c}\lambda\zeta_j-E\zeta_j^2\\
		EG\zeta_j^5-iEk_2\lambda\zeta_j^4-Gm\lambda^2\zeta_j+imk_2\lambda^3
		\end{bmatrix}
		\quad\text{with $j=1,3,5$}
\end{align*}
and
\begin{equation*}
	\bar{E}:=\text{diag}(e_1,e_3,e_5),
\end{equation*}
where values $j=1,3,5$ correspond to columns one, two and three
respectively.

Now, the three right-end boundary conditions generate the system
\begin{align*}
	A_3\bar{E}b&=B_3\bar{E}^{-1}c\\
	b&=\bar{E}^{-1}A_3^{-1}B_3\bar{E}^{-1}c\\
	b&=R_2c.
\end{align*}
Using $3\times 3$ matrix
\begin{equation*}
	B_4:=\begin{bmatrix}0\\E\zeta_j^2\\iEk_2\lambda\zeta_j^4
		-imk_2\lambda^3\end{bmatrix}
	\quad\text{with $j=1,3,5$},
\end{equation*}
where again values $j=1,3,5$ correspond to columns one, two and three,
we simplify the calculation of $R_2$ to
\begin{equation}\label{eq:R2-calculation}
	R_2=\bar{E}^{-1}A_3^{-1}B_3\bar{E}^{-1}
	=\bar{E}^{-1}A_3^{-1}\left(A_3-2B_4\right)\bar{E}^{-1}
	=\bar{E}^{-1}\left(I-2A_3^{-1}B_4\right)\bar{E}^{-1}.
\end{equation}
Since the first row of $B_4$ is zero, only columns two and three of
$A_3^{-1}$ are needed for determining $R_2$.

As before, to determine the desired entries of $A_3^{-1}$ we begin with
calculating $A_3$'s determinant,
\begin{equation*}
	\frac{1}{\det A_3}=\frac{1}{i2Ek_1a_1^4a_3^4(Ga_1+k_2)}\lambda^{-8}
		\left[1-d_1\lambda^{-1/2}+d_2\lambda^{-1}+\Ol{-3/2}\right],
\end{equation*}
where
\begin{align*}
	d_1&:=\frac{(1-i)Ea_3}{2k_1}+\frac{(i-1)k_2\left(Ea_3^4-m\right)}
		{2Ea_1a_3^3(Ga_1+k_2)},\\
	\hat{d}_2&:=i\frac{C_IC_D}{k_1C_p}+i\frac{k_2\left(Ea_3^4-m\right)}
		{k_1a_1a_3^2(Ga_1+k_2)},\\
	d_2&:=d_1^2-\hat{d}_2.
\end{align*}
Next, we calculate the required entries of matrices $A_3^{-1}$ and $B_4$
and combine them to derive the entries of $I-2A_3^{-1}B_4$
which appears in \eqref{eq:R2-calculation}.
Denoting its entries simply by $(j,k)$ we have
\begin{align*}
	(1,1)&=1-r_{11}\left[1+\hat{r}_{11}\lambda^{-1/2}
		+\tilde{r}_{11}\lambda^{-1}+\Ol{-3/2}\right],\\
	(1,2)&=r_{12}\lambda^{-2}\left[1-\hat{r}_{12}\lambda^{-1/2}
		+\Ol{-1}\right],\\
	(1,3)&=r_{12}\lambda^{-2}\left[1+\hat{r}_{13}\lambda^{-1/2}
		+\Ol{-1}\right],\\
	(2,1)&=r_{21}\lambda^{3/2}\left[1-\hat{r}_{12}\lambda^{-1/2}
		+\Ol{-1}\right],\\
	(2,2)&=1-r_{22}\lambda^{-1/2}\left[1+(\hat{r}_{22}-d_1)\lambda^{-1/2}
		+\Ol{-1}\right],\\
	(2,3)&=r_{23}\lambda^{-1/2}\left[1-d_1\lambda^{-1/2}+\Ol{-1}\right],\\
	(3,1)&=-ir_{21}\lambda^{3/2}\left[1+\hat{r}_{13}\lambda^{-1/2}
		+\Ol{-1}\right],\\
	(3,2)&=-ir_{23}\lambda^{-1/2}\left[1-d_1\lambda^{-1/2}
		+\Ol{-1}\right],\\
	(3,3)&=1+ir_{22}\lambda^{-1/2}\left[1+(i\hat{r}_{22}-d_1)\lambda^{-1/2}
		+\Ol{-1}\right],
\end{align*}
with constants
\begin{align*}
	r_{11}&:=\frac{2k_2}{Ga_1+k_2},&
	\hat{r}_{11}&:=\frac{(1-i)Ea_3}{2k_1}-d_1,\\
	\tilde{r}_{11}&:=i\frac{C_IC_D}{k_1C_p}-\frac{(1-i)Ea_3d_1}{2k_1}+d_2,&
	r_{12}&:=-\frac{2k_2\left(Ea_3^4-m\right)}{Ea_1^4(Ga_1+k_2)},\\
	\hat{r}_{12}&:=i\frac{Ea_3}{k_1}+d_1,&
	\hat{r}_{13}&:=\frac{Ea_3}{k_1}-d_1\\
	r_{21}&:=\frac{k_2a_1^3}{a_3^3(Ga_1+k_2)}
\end{align*}
and
\begin{align*}
	r_{22}&:=\frac{E^2a_1a_3^4(Ga_1+k_2)-k_1k_2\left(Ea_3^4-m\right)}
		{Ek_1a_1a_3^3(Ga_1+k_2)},\\
	\hat{r}_{22}&:=i\frac{2Ek_2a_3\left(Ea_3^4-m\right)}
		{E^2a_1a_3^4(Ga_1+k_2)-k_1k_2\left(Ea_3^4-m\right)},\\
	r_{23}&:=\frac{E^2a_1a_3^4(Ga_1+k_2)+k_1k_2\left(Ea_3^4-m\right)}
		{Ek_1a_1a_3^3(Ga_1+k_2)}.
\end{align*}

Multiplying $I-2A_3^{-1}B_4$ from left and right by diagonal
matrix $\bar{E}^{-1}$ yields
\begin{equation*}
	R_2=\begin{bmatrix}
		e_1^{-2}(1,1)&e_1^{-1}e_3^{-1}(1,2)&e_1^{-1}e_5^{-1}(1,3)\\
		e_1^{-1}e_3^{-1}(2,1)&e_3^{-2}(2,2)&e_3^{-1}e_5^{-1}(2,3)\\
		e_1^{-1}e_5^{-1}(3,1)&e_3^{-1}e_5^{-1}(3,2)&e_5^{-2}(3,3)
		\end{bmatrix},
\end{equation*}
where $(j,k)$ denotes the entries of $I-2A_3^{-1}B_4$ calculated above.

\subsection{Solving the spectral equation}

We are now in a position to derive and solve a modified version of the
reduced spectral equation \eqref{eq:spectral-reduced}.
We start by considering the behavior of functions $e_j(\lambda)$,
$j=1,3,5$, $\lambda\in\C\setminus\{0\}$, defined in \eqref{eq:e135-func}.
Let
\begin{equation}
	\lambda=:x+iy,\qquad \lambda^{1/2}=:u+iv,
\end{equation}
with $x\in\R$ and $y$, $u$, $v\geq 0$.
Since the set of eigenvalues is symmetric about the imaginary axis,
we need only consider the case where $\lambda$ is in the complex plane's
first quadrant; namely, $x$, $y\geq 0$.
Consequently, $\lambda^{1/2}$ is also in the first quadrant, and
particularly in the triangular domain below straight line $u=v$,
with $u\rightarrow\infty$ as $\lambda\rightarrow\infty$.

Using \eqref{eq:zeta12-char}-\eqref{eq:zeta56-char},
we expand functions $e_j$ to leading order
\begin{align*}
	e_1(\lambda)&=e^{ia_1L\lambda}\left[1+\Ol{-1}\right]=
		e^{-a_1Ly}e^{ia_1Lx}\left[1+\Ol{-1}\right],\\
	e_3(\lambda)&=e^{ia_3L\lambda^{1/2}}\Bigl[1+\Ol{-1/2}\Bigr]=
		e^{-a_3Lv}e^{ia_3Lu}\Bigl[1+\Ol{-1/2}\Bigr],\\
	e_5(\lambda)&=e^{a_3L\lambda^{1/2}}\Bigl[1+\Ol{-1/2}\Bigr]=
		e^{a_3Lu}e^{ia_3Lv}\Bigl[1+\Ol{-1/2}\Bigr],
\end{align*}
which shows that 
$e_1$, $e_3$ and $e_5^{-1}$ are bounded functions with
$e_5^{-1}\rightarrow 0$ exponentially as $\lambda\rightarrow\infty$,
whereas $e_1^{-1}$, $e_3^{-1}$ and $e_5$ are unbounded.
Assuming it is nonsingular, we use diagonal matrix
\begin{equation*}
	\tilde{E}:=\text{diag}(e_1,e_3,1)
\end{equation*}
to write
\begin{align}
	\det (R_1-R_2)&=0\nonumber\\
	\det [\tilde{E}(R_1-R_2)\tilde{E}]&=0\nonumber\\
	\label{eq:spectral-reduced2}
	\det R_3&=0,
\end{align}
so that unbounded terms $e_1^{-1}$ and $e_3^{-1}$ are removed
from the spectral equation.

To derive the reduced spectral equation \eqref{eq:spectral-reduced2},
we calculate the entries of matrix $R_3$ while removing all
exponentially decaying terms,
\begin{align*}
	R_3(1,1)&=-e_1^2-1+r_{11}\left(1+\hat{r}_{11}\lambda^{-1/2}
		+\tilde{r}_{11}\lambda^{-1}\right)+\Ol{-3/2},\\
	R_3(1,2)&=-r_{12}\lambda^{-2}\left(1-\hat{r}_{12}\lambda^{-1/2}
	\right)+\Ol{-3},\\
	R_3(1,3)&=\Ol{-3},\\
	R_3(2,1)&=-r_{21}\lambda^{3/2}\left(1-\hat{r}_{12}\lambda^{-1/2}
		\right)+\Ol{1/2},\\
	R_3(2,2)&=-ie_3^2-1-\frac{2a_4(1+ia_3)}{(1+i)a_3}e_3^2\lambda^{-1}
		+r_{22}\lambda^{-1/2}\left[1+(\hat{r}_{22}-d_1)\lambda^{-1/2}
		\right]+\Ol{-3/2},\\
	R_3(2,3)&=-\frac{2}{1-i}e_3\Bigl[1+\frac{(1-ia_3)a_4}{a_3}\lambda^{-1}
		\Bigr]+\Ol{-3/2},\\
	R_3(3,1)&=\Ol{1/2},\\
	R_3(3,2)&=-\frac{2}{1+i}e_3\Bigl[1-\frac{(1+ia_3)a_4}{a_3}\lambda^{-1}
		\Bigr]+\Ol{-3/2},\\
	R_3(3,3)&=i+\frac{2a_4(1-ia_3)}{(1-i)a_3}\lambda^{-1}+\Ol{-3/2}.
\end{align*}
Finally, expanding $\det R_3$ about the matrix's first row and
using functions
\begin{equation*}
	H_1(\lambda):=-e_1^2+r_{11}-1,\qquad
		H_2(\lambda):=-e_3^2-i,
\end{equation*}
we write equation \eqref{eq:spectral-reduced2} in the form
\begin{equation}\label{eq:spectral-reduced3}
	D_1-D_2=\Ol{-3/2},
\end{equation}
where 
\begin{align*}
	D_1&:=H_1H_2+\left[ir_{22}H_1+r_{11}\hat{r}_{11}H_2
		\right]\lambda^{-1/2}
		+\left[\tilde{D}_1H_1+ir_{11}\hat{r}_{11}r_{22}
		+r_{11}\tilde{r}_{11}H_2\right]\lambda^{-1},\\
	\tilde{D}_1&:=ir_{22}(\hat{r}_{22}-d_1)+i4a_4e_3^2
		-\frac{2a_4}{1-i}\frac{1-ia_3}{a_3}(ie_3^2+1)
		-i\frac{2a_4}{1+i}\frac{1+ia_3}{a_3}e_3^2,\\
	D_2&:=ir_{12}r_{21}\lambda^{-1/2}\Bigl(1-2\hat{r}_{12}
		\lambda^{-1/2}\Bigr).
\end{align*}

Close inspection of \eqref{eq:spectral-reduced3} reveals that the terms
of the two leading orders of magnitude ($1$, $\lambda^{-1/2}$)
contain no piezoelectric parameters, only mechanical and control
parameters.
The third order terms ($\lambda^{-1}$) are the first to depend on
piezoelectric parameters.
This proves Theorem \ref{thm:weak-perturbation}.

Next, keeping terms of only the first two orders of magnitude in
\eqref{eq:spectral-reduced3},
we derive the second order spectral equation
\begin{equation}\label{eq:spectral2-order2}
	g_1(\lambda)g_2(\lambda)+\bigl[ig_1(\lambda)h_1(\lambda)
	+r_{11}\hat{r}_{11}g_2(\lambda)-ir_{12}r_{21}\bigr]\lambda^{-1/2}
	=\Ol{-1},
\end{equation}
with functions
\begin{align}
	\label{eq:g1-func}
	g_1(\lambda)&:=-e^{i2c_1\lambda} + r_{11}-1,\\
	\label{eq:g2-func}
	g_2(\lambda)&:=-e^{i2c_3\lambda^{1/2}}-i,\\
	\label{eq:h1-func}
	h_1(\lambda)&:=r_{22}+2c_4e^{i2c_3\lambda^{1/2}}
\end{align}
and 
\begin{equation}\label{eq:c1234-const}
	c_j:=a_jL,\quad j=1,2,3,4.
\end{equation}

To first order, equation \eqref{eq:spectral2-order2}
reads
\begin{equation}\label{eq:spectral2-order1}
	g_1(\lambda)g_2(\lambda)=\Ol{-1/2}.
\end{equation}
Omitting the remainder term leads to solutions
\begin{equation}
	\label{eq:branch1-leading}
	g_1(\tilde{\lambda}_{1,n})=0\,\,\Rightarrow\,\,
	\tilde{\lambda}_{1,n}=\frac{n\pi}{c_1}
		-i\frac{1}{2c_1}\ln (r_{11}-1),\quad n\in\N,
\end{equation}
subject to $r_{11}-1>0$, and
\begin{equation}
	\label{eq:branch2-leading}
	g_2(\tilde{\lambda}_{2,n})=0\,\,\Rightarrow\,\,
	\tilde{\lambda}_{2,n}=(n-1/4)^2\frac{\pi^2}{c_3^2},
		\quad n\in\N,
\end{equation}
which we refer to respectively as the unperturbed branches 1 and 2.
Therefore, the leading order term of the asymptotic approximation
of the eigenvalues consists of the two disjoint subsets
\eqref{eq:branch1-leading} and \eqref{eq:branch2-leading}.
Moving on, we use the leading order solutions
\eqref{eq:branch1-leading} and \eqref{eq:branch2-leading}
to derive asymptotic approximations of the set of eigenvalues.

\subsection{Perturbed branch 1}\label{sec:branch1}

First, we quantify the contribution of the remainder term omitted from
\eqref{eq:spectral2-order1},
leading to a first order approximation of a subset of spectrum
$\spec{A}$.

We fix $n\in\N$ and define disk $B_{\epsilon}(\tilde{\lambda}_{1,n})$ of
radius $\epsilon>0$ centered at $\tilde{\lambda}_{1,n}$.
For $\lambda\in B_{\epsilon}(\tilde{\lambda}_{1,n})$
we cannot conclude that the values  $\abs{g_2(\lambda)}$
are bounded away from zero.
This is due to the fact that function $g_2$ depends on
$\lambda^{1/2}$, and points 
$\{\tilde{\lambda}_{1,n}^{1/2}\}_{n\in\N}$
are approaching the real axis as $n\rightarrow\infty$,
where all points 
$\{\tilde{\lambda}_{2,m}^{1/2}\}_{m\in\N}$---the zeros
of $g_2$---are located.
This can be seen by evaluating $g_2(\lambda)$ for 
$\lambda\in B_{\epsilon}(\tilde{\lambda}_{1,n})$,
leading to
\begin{equation*}
	\abs{g_2(\lambda)}^2=2\left[\sin\left(2c_3\sqrt{\tfrac{\pi}{c_1}}
		n^{1/2}\right)+1\right]+\O{n^{-1/2}},
\end{equation*}
which shows that $\abs{g_2(\lambda)}$ is oscillating with $n$ within
$[0,2+\xi]$, where $\xi\rightarrow 0$ as $n\rightarrow\infty$.

To resolve that, we fix a small positive constant $\delta$ and consider the
subset $\{\tilde{\lambda}_{1,n}\}_{n\in\N^*}$, $\N^*\subset\N$,
for which
$\underset{\lambda}{\inf}\abs{g_2(\lambda)}>\delta$ when
$\lambda\in B_{\epsilon}(\tilde{\lambda}_{1,n})$,
leading to a countably infinite subset of the unperturbed branch 1 points.
For this subset, we may rewrite \eqref{eq:spectral2-order1}
as
\begin{equation*}
	g_1(\lambda)=\Ol{-1/2}.
\end{equation*}
Now, we employ Rouch\'{e}'s theorem to show that analytic functions
$g_1$ and $g_1+\Ol{-1/2}$ have the same number of zeros in
$B_{\epsilon}(\tilde{\lambda}_{1,n})$
for an appropriately selected $\epsilon$.

To evaluate the functions on $\partial B_{\epsilon}(\tilde{\lambda}_{1,n})$,
let $\lambda=\tilde{\lambda}_{1,n}+\epsilon e^{i\theta}$,
$\theta\in [-\pi,\pi)$.
It follows that
\begin{align*}
	g_1(\lambda)&=-e^{i2c_1\tilde{\lambda}_{1,n}} e^{i2c_1\epsilon e^{i\theta}}
		+r_{11}-1\\
	&=(r_{11}-1)\bigl(1-e^{i2c_1\epsilon e^{i\theta}}\bigr)\\
	&=i2c_1e^{i\theta}(r_{11}-1)\epsilon\bigl[1+\O{\epsilon}\bigr]\quad
		\text{as $\epsilon\rightarrow 0$},
\end{align*}
or
\begin{equation*}
	\abs{g_1(\lambda)}>M_1\epsilon\quad
		\text{on $\partial B_{\epsilon}(\tilde{\lambda}_{1,n})$}
\end{equation*}
with constant $M_1>0$ and sufficiently small $\epsilon$.
Moreover, for sufficiently large $n$ there exists constant $M_2>0$
such that
\begin{equation*}
	\abs{\Ol{-1/2}}=\abs{\O{n^{-1/2}}}\leq M_2n^{-1/2}\quad
		\text{on $\partial B_{\epsilon}(\tilde{\lambda}_{1,n})$}.
\end{equation*}
Therefore, for $\epsilon _n=2M_2M_1^{-1}n^{-1/2}$
\begin{equation*}
	\abs{g_1(\lambda)}>2M_2n^{-1/2}>\abs{\Ol{-1/2}}\quad
		\text{on $\partial B_{\epsilon}(\tilde{\lambda}_{1,n})$}.
\end{equation*}
Applying Rouch\'{e}'s theorem yields that functions $g_1$ and
$g_1+\Ol{-1/2}$ have the same number of zeros in
$B_{\epsilon_n}(\tilde{\lambda}_{1,n})$,
with $\epsilon_n=\O{n^{-1/2}}$ as $n\rightarrow\infty$.
Namely, the first order asymptotic approximation of the considered subset
of eigenvalues reads
\begin{equation}\label{eq:branch1-order1}
	\lambda_{1,n}=\tilde{\lambda}_{1,n}\bigl[1+\O{n^{-3/2}}\bigr],
		\quad n\in\N^*,
\end{equation}
with $\N^*\subset\N$ the infinite subset constructed above.

Next, we calculate a formula for the term of order $n^{-3/2}$ and
establish the new remainder term.
To do so, we look for solutions of the second order spectral equation
\eqref{eq:spectral2-order2}
of the form
\begin{equation*}
	\lambda =\tilde{\lambda}_{1,n}(1+w_1).
\end{equation*}
It follows that
\begin{align*}
	g_1(\lambda)&=(r_{11}-1)\bigl(1-e^{i2c_1\tilde{\lambda}_{1,n}w_1}
		\bigr),\\
	g_2(\lambda)&=g_2(\tilde{\lambda}_{1,n})+\O{n^{-1}},\\
	h_1(\lambda)&=h_1(\tilde{\lambda}_{1,n})+\O{n^{-1}}.
\end{align*}
Substituting into \eqref{eq:spectral2-order2}
and keeping terms of the first two orders of magnitude (1, $n^{-1/2}$)
yields
\begin{equation}\label{eq:branch1-K1}
	e^{i2c_1\tilde{\lambda}_{1,n}w_1}=
		\frac{1+\left[ih_1(\tilde{\lambda}_{1,n})
		g_2^{-1}(\tilde{\lambda}_{1,n})
		+\frac{r_{11}\hat{r}_{11}}{r_{11}-1}
		-i\frac{r_{12}r_{21}}{(r_{11}-1)g_2(\tilde{\lambda}_{1,n})}
		\right]\tilde{\lambda}_{1,n}^{-1/2}}
		{1+ih_1(\tilde{\lambda}_{1,n})g_2^{-1}(\tilde{\lambda}_{1,n})
		\tilde{\lambda}_{1,n}^{-1/2}}
		=:K_{1,n},
\end{equation}
or equivalently
\begin{equation}\label{eq:branch1-w1}
	w_{1,n}=-i\frac{1}{2c_1}\tilde{\lambda}_{1,n}^{-1}\ln K_{1,n}
		=\O{n^{-3/2}}.
\end{equation}

To calculate the new remainder term, we consider
$\lambda\in B_{\epsilon}(\hat{\lambda}_{1,n})$ 
with radius $\epsilon>0$ and center
$\hat{\lambda}_{1,n}=\tilde{\lambda}_{1,n}(1+w_{1,n})$.
Using function
\begin{equation*}
	G_1(\lambda):=g_1(\lambda)g_2(\tilde{\lambda}_{1,n})+\left[
		ig_1(\lambda)h(\tilde{\lambda}_{1,n})
		+r_{11}\hat{r}_{11}g_2(\tilde{\lambda}_{1,n})-ir_{12}r_{21}\right]
		\tilde{\lambda}_{1,n}^{-1/2}
\end{equation*}
we rewrite the second order spectral equation
\eqref{eq:spectral2-order2}
as 
\begin{equation*}
	G_1(\lambda)=\Ol{-1},
\end{equation*}
with $G_1(\hat{\lambda}_{1,n})=0$ as established above.
Applying Rouch\'{e}'s theorem in a way analogous to that shown earlier,
we derive the second order asymptotic approximation
\begin{equation}\label{eq:branch1-order2}
	\lambda_{1,n}=\tilde{\lambda}_{1,n}\bigl[
		1+w_{1,n}+\O{n^{-2}}\bigr],\quad n\in\N^*,
\end{equation}
with $w_{1,n}$ of order $n^{-3/2}$ given by \eqref{eq:branch1-w1}.

\subsection{Perturbed branch 2}\label{sec:branch2}

The proof of the perturbed branch 2 approximation follows the same steps
as that of perturbed branch 1, so we present only the key points.
First, we pick $n\in\N$ and form disk
$B_{\epsilon}(\tilde{\lambda}_{2,n})$
of radius $\epsilon>0$ centered at $\tilde{\lambda}_{2,n}$.
To make sure that $\underset{\lambda}{\inf}\abs{g_1(\lambda)}>0$
when $\lambda\in B_{\epsilon}(\tilde{\lambda}_{2,n})$
it is sufficient to assume that
\begin{equation*}
	\epsilon<\abs{\frac{1}{2c_1}\ln(r_{11}-1)},
\end{equation*}
with the right hand side being the distance along the imaginary axis
between any two points in sets
$\{\tilde{\lambda}_{1,n}\}_{n\in\N}$ and
$\{\tilde{\lambda}_{2,m}\}_{m\in\N}$.
Application of Rouch\'{e}'s theorem then yields the first order
approximation
\begin{equation}\label{eq:branch2-order1}
	\hat{\lambda}_{2,n}=\tilde{\lambda}_{2,n}+\O{1}
		=\tilde{\lambda}_{2,n}\left[1+\O{n^{-2}}\right],
		\quad n\in\N.
\end{equation}

Next, we look for solutions of \eqref{eq:spectral2-order2} of the form
\begin{equation*}
	\lambda =\tilde{\lambda}_{2,n}(1+w_2).
\end{equation*}
Substitution into \eqref{eq:spectral2-order2}
leads to the following transcendental equation for $w_2$,
\begin{equation}\label{eq:branch2-w2}
\left[(ir_{22}+2c_4)\tilde{\lambda}_{2,n}^{-1/2}
		-c_3\tilde{\lambda}_{2,n}^{1/2}w_2\right]
		\left(-e^{i2c_1\tilde{\lambda}_{2,n}}
		e^{i2c_1\tilde{\lambda}_{2,n}w_2}+r_{11}-1\right)
		-ir_{12}r_{21}\tilde{\lambda}_{2,n}^{-1/2}=0.
\end{equation}
We consider \eqref{eq:branch2-w2}
to be the solution for $w_{2,n}$ in implicit form.

Finally, we form $B_{\epsilon}(\hat{\lambda}_{2,n})$
of radius $\epsilon>0$ centered at
$\hat{\lambda}_{2,n}=\tilde{\lambda}_{2,n}(1+w_{2,n})$.
Reapplying Rouch\'{e}'s theorem yields the second order asymptotic
approximation
\begin{equation}\label{eq:branch2-order2}
	\lambda_{2,n}=\tilde{\lambda}_{2,n}\left[1+w_{2,n}+\O{n^{-3}}\right],
		\quad n\in\N,
\end{equation}
with $w_{2,n}$ of order $n^{-2}$ given by \eqref{eq:branch2-w2}.

\section{Discussion}\label{sec:discussion}

In the present work we demonstrate that the
addition of piezoelectric energy harvesting can be viewed as a
weak perturbation of the underlying beam model, by showing that
no piezoelectric parameters appear in the first two orders
of magnitude of the asymptotic approximation of spectrum
$\sigma(A)$.

In addition, we show that the leading order term of the
asymptotic approximation of $\sigma(A)$
consists of a two-branch structure, which is the same structure with
that identified in \cite{Bal2004} where only the mechanical part
of the present model was considered.
Furthermore, we prove that the two-branch structure is
retained in the second order approximation of an infinite subset
$\sigma^*(A)\subset\sigma(A)$,
but not necessarily in that of the whole spectrum.

However, the present proof offers no information on the behavior
of complement $\sigma(A)\setminus\sigma^*(A)$, which may consist
of eigenvalues breaking the unperturbed structure.
Additionally, the remainder term derived in Section \ref{sec:branch2}
for the first order approximation of the perturbed branch 2 eigenvalues
is of order 1; namely, the remainder is not necessarily decreasing
with the eigenvalue number $n$ as $n\rightarrow\infty$.
These results show that additional work is required to fully understand
the role of the unperturbed structure in the higher order asymptotic
approximations of $\sigma(A)$.

To address these shortcomings, we plan to solve the second order
asymptotic equation \eqref{eq:spectral2-order2}
and the original spectral problem \eqref{eq:spectral-A}
numerically.
Doing so will allow us to verify the asymptotic analysis
and generate results for subset $\sigma(A)\setminus\sigma^*(A)$
not covered by the derived asymptotic approximation.
Our goal is to combine the asymptotic and numerical results to derive
a first order asymptotic approximation of the whole spectrum 
$\sigma(A)$.

Having an asymptotic approximation of the whole spectrum will enable us
to use existing operator technology to study the Riesz basis property
for the governing operator's set of eigenvectors
\cite{Bal2004,Shubov2014},
which remains an open problem for the coupled bending-torsion beam.

\appendix
\section{Omitted proofs}\label{sec:omitted-proofs}

\begin{proof}[\emph{\textbf{Proof of Lemma \ref{lem:inner-prod}}}]
It follows immediately that the functional is linear in the first argument,
conjugate symmetric, and that $f=0$ yields $\innprod{f}{f}=0$.
Next, given that
\begin{equation*}
	\abs{S\left(f_1\bar{f}_3+f_3\bar{f}_1\right)}\leq 2S\abs{f_1}\abs{f_3},
\end{equation*}
it is also true that
\begin{equation*}
\begin{split}
	C&\geq m\abs{f_1}^2 + J\abs{f_3}^2 - 2S\abs{f_1}\abs{f_3}\\
	&=\left(\sqrt{m}\abs{f_1} - \sqrt{J}\abs{f_3}\right)^2
		+ 2\sqrt{mJ}\abs{f_1}\abs{f_3} - 2S\abs{f_1}\abs{f_3}\\
	&=\left(\sqrt{m}\abs{f_1} - \sqrt{J}\abs{f_3}\right)^2
		+ 2\frac{D}{\sqrt{mJ}+S}\abs{f_1}\abs{f_3}\\
	&\geq 0
\end{split}
\end{equation*}
since $D>0$, which means that $\innprod{f}{f}\geq 0$
for any $f\in\widetilde{\mathcal{H}}$.
Finally, if $\innprod{f}{f}=0$ then nonnegativity implies that
\begin{equation*}
	f_0(x)=c_1x+c_2,\qquad f_2(x)=c_3,\qquad f_4=0,
\end{equation*}
for constants $c_1$, $c_2$ and $c_3$, as well as
\begin{equation*}
	\left(\sqrt{m}\abs{f_1}-\sqrt{J}\abs{f_3}\right)^2
		+2\frac{D}{\sqrt{mJ}+S}\abs{f_1}\abs{f_3}=0,
\end{equation*}
which yields that $f_1$, $f_3=0$.
Enforcing the boundary conditions encoded in
$\widetilde{\mathcal{H}}$ requires that $f_0$, $f_2=0$;
namely, $f=0$.
\end{proof}

\begin{proof}[\emph{\textbf{Proof of Lemma \ref{lem:norm-equiv}}}]
Let $f\in\widetilde{\mathcal{H}}$.
Using inequality
\begin{equation*}
	\abs{f_1\bar{f}_3+f_3\bar{f}_1}\leq 2\abs{f_1}\abs{f_3}
		\leq \abs{f_1}^2+\abs{f_3}^2,
\end{equation*}
we write
\begin{equation*}\begin{split}
	\norm{f}^2&\leq\frac{1}{2}\int_0^L\biggl[E\abs{f_0''}^2
		+m\abs{f_1}^2 + G\abs{f_2'}^2 + J\abs{f_3}^2
		+S\left(\abs{f_1}^2 + \abs{f_3}^2\right)\biggr]dx
		+\frac{1}{2}C_p\abs{f_4}^2\\
	&\leq\frac{1}{2}\int_0^L\biggl[E\sum_{j=0}^2\abs{f_0^{(j)}}^2
		+(m+S)\abs{f_1}^2 + G\sum_{k=0}^1\abs{f_2^{(k)}}^2
		+(J+S)\abs{f_3}^2\biggr]dx + \frac{1}{2}C_p\abs{f_4}^2\\
	&\leq C{\norm{f}}_1^2,
\end{split}\end{equation*}
with finite positive constant
$C:=\frac{1}{2}\max\left(E,\, m+S,\, G,\, J+S,\, C_p\right)$.

To prove the converse inequality, we begin by calculating finite positive
constants $c_0$ and $c_2$ such that
\begin{equation*}
	\norm{f_0''}_{L^2}^2\geq c_0\norm{f_0}_{H^2}^2,\qquad
	\norm{f_2'}_{L^2}^2\geq c_2\norm{f_2}_{H^1}^2.
\end{equation*}
Since $f_0\in C^{\infty}([0,L])$ with $f_0(0)=f_0'(0)=0$,
we may express $f_0$ in the form
\begin{equation*}
	f_0(x)=\int_0^x\int_0^yf_0''(z)dzdy,\quad
	\text{with $f_0''\in C([0,L])$}.
\end{equation*}
Then, using the Cauchy-Schwarz inequality we find that
\begin{equation*}
	\int_0^L\abs{f_0'(x)}^2dx\leq L^2\int_0^L\abs{f_0''(x)}^2dx
\end{equation*}
and
\begin{equation*}
	\int_0^L\abs{f_0(x)}^2dx\leq L^4\int_0^L\abs{f_0''(x)}^2dx,
\end{equation*}
which yield
\begin{equation*}
	\norm{f_0''}_{L^2}^2\geq c_0\norm{f_0}_{H^2}^2,\quad
	\text{with $c_0:=\left(L^4+L^2+1\right)^{-1}$}.
\end{equation*}
Similarly, for $f_2$ we find that
\begin{equation*}
	\norm{f_2'}_{L^2}^2\geq c_2\norm{f_2}_{H^1}^2,\quad
	\text{with $c_2:=\left(L^2+1\right)^{-1}$}.
\end{equation*}
We are now in a position to write
\begin{equation*}\begin{split}
\norm{f}^2&\geq\frac{1}{2}\int_0^L\biggl[E\abs{f_0''}^2
	+m\abs{f_1}^2 + G\abs{f_2'}^2 + J\abs{f_3}^2
	-S\left(\abs{f_1}^2+\abs{f_3}^2\right)\biggr]dx
	+\frac{1}{2}C_p\abs{f_4}^2\\
&\geq\frac{1}{2}\int_0^L\biggl[Ec_0\sum_{j=0}^2\abs{f_0^{(j)}}^2
	+(m-S)\abs{f_1}^2 +Gc_2\sum_{k=0}^1\abs{f_2^{(k)}}^2
	+(J-S)\abs{f_3}^2\biggr]dx +\frac{1}{2}C_p\abs{f_4}^2\\
&\geq c{\norm{f}}_1^2,
\end{split}\end{equation*}
with $c:=\frac{1}{2}\min\left(Ec_0,\, m-S,\, Gc_2,\, J-S,\, C_p\right)$
finite and positive if $S<m$ and $S<J$.
\end{proof}

\begin{proof}[\emph{\textbf{Proof of Lemma \ref{lem:domain-dense}}}]
For given $g\in\mathcal{H}$ and $\epsilon>0$,
we construct function $f_{\epsilon}\in\dom A$ such that
$\norm{f_{\epsilon}-g}_1<C\epsilon$ for a finite positive constant $C$.
Invoking Lemma \ref{lem:norm-equiv} then yields the desired result.

We begin by setting $f_4:=g_4\in\C$.
Next, the denseness of $C_c^{\infty}((0,L))$ in $L^2([0,L])$
provides the existence of $f_1\in C_c^{\infty}((0,L))$
such that $\norm{f_1-g_1}_{L^2}^2<\epsilon$
\cite{LiebLoss,HunterNacht}.
It follows that  $f_1\in H^2((0,L))$
and $f_1(0)=f_1'(0)=f_1'(L)=0$.
The same argument yields
$f_3\in C_c^{\infty}((0,L))\subset H^1((0,L))$
with $f_3(0)=f_3(L)=0$
such that $\norm{f_3-g_3}_{L^2}^2<\epsilon$.

Since $g_2\in H^1((0,L))$ with $g_2(0)=0$, there is
$g_2'\in L^2([0,L])$ such that
\begin{equation*}
	g_2(x)=\int_0^xg_2'(y)dy,\quad x\in[0,L].
\end{equation*}
Now, there exists $f_2'\in C_c^{\infty}((0,L))$
such that $\norm{f_2'-g_2'}_{L^2}^2<\epsilon$.
Using that, we form $f_2\in C^{\infty}([0,L])$ as
\begin{equation*}
	f_2(x):=\int_0^xf_2'(y)dy,\quad x\in [0,L],
\end{equation*}
which implies that $f_2\in H^2((0,L))$ with $f_2(0)=f_2'(L)=0$.
Using the Cauchy-Schwarz inequality, we write
\begin{equation*}
	\norm{f_2-g_2}_{L^2}^2\leq
		L^2\int_0^L\abs{f_2'(y)-g_2'(y)}^2dy
		=L^2\norm{f_2'-g_2'}_{L^2}^2
\end{equation*}
and
\begin{equation*}
\norm{f_2-g_2}_{H^1}^2=\norm{f_2-g_2}_{L^2}^2+
	\norm{f_2'-g_2'}_{L^2}^2<C_2\epsilon,
\end{equation*}
with $C_2:=1+L^2$.

Since $g_0\in H^2((0,L))$ with $g_0(0)=g_0'(0)=0$,
there is $g_0''\in L^2([0,L])$ for which
\begin{equation*}
	g_0(x)=\int_0^x\int_0^yg_0''(z)dzdy,\quad x\in[0,L].
\end{equation*}
We truncate and extend $g_0''$ to form $\tilde{f}_0''\in L^2(\R)$,
\begin{equation*}
	\tilde{f}_0''(x):=\begin{cases}
		0& x<0,\\
		g_0''(x)& x\in(0,L-\epsilon),\\
		\alpha& x\in(L-\epsilon,L+\epsilon),\\
		0& x>L+\epsilon,
		\end{cases}
\end{equation*}
with constant $\alpha:=-C_IE^{-1}g_4\in\C$.
Using positive constant $\delta<\epsilon$ and mollifier 
$\phi_{\delta}\in C_c^{\infty}(\R)$,
we form $f_0''\in C^{\infty}([0,L])$ as
\begin{equation*}
	f_0''(x):=\int_{\R}\phi_{\delta}(x-y)\tilde{f}_0''(y)dy,
		\quad x\in[0,L],
\end{equation*}
which ensures that $f_0''$ satisfies the desired conditions
$f_0''(L)=\alpha$ and $f_0'''(L)=0$.
Next, we define $f_0\in C^{\infty}([0,L])$ by
\begin{equation*}
	f_0(x):=\int_0^x\int_0^yf_0''(z)dzdy,\quad x\in[0,L],
\end{equation*}
so that $f_0\in H^4((0,L))$ with $f_0(0)=f_0'(0)=0$,
$f_0''(L)=\alpha$ and $f_0'''(L)=0$.
Denoting by $\tilde{f}_0''|_{[0,L]}$ the restriction of
$\tilde{f}_0''$ to $[0,L]$, we write
\begin{equation*}
	\norm{f_0''-g_0''}_{L^2}^2<
		\norm{f_0''-\tilde{f}_0''|_{[0,L]}}_{L^2}^2
		+\norm{\tilde{f}_0''|_{[0,L]}-g_0''}_{L^2}^2.
\end{equation*}
For any $u\in L^2(\R)$, its mollification $\phi_{\delta}\ast u$
converges to $u$ in $L^2(\R)$ as $\delta\downarrow 0$
\cite{LiebLoss,HunterNacht}.
It follows that the restriction of
$f_0''\equiv\phi_{\delta}\ast\tilde{f}_0''$ to $[0,L]$
converges in $L^2([0,L])$ to the corresponding restriction
of $\tilde{f}_0''$ as $\delta\downarrow 0$;
namely, that
\begin{equation*}
	\norm{f_0''-\tilde{f}_0''|_{[0,L]}}_{L^2}^2
		<\tilde{c}_0\delta<\tilde{c}_0\epsilon,
\end{equation*}
for a finite positive constant $\tilde{c}_0$.
Additionally,
\begin{equation*}
	\norm{\tilde{f}_0''|_{[0,L]}-g_0''}_{L^2}^2
		=\int_{L-\epsilon}^L\abs{\alpha-g_0''(x)}^2dx
		<(\alpha+\norm{g_0''}_{L^{\infty}})\epsilon.
\end{equation*}
Therefore,
\begin{equation*}
	\norm{f_0''-g_0''}_{L^2}^2<c_0\epsilon,
\end{equation*}
with $c_0:=\tilde{c}_0+\alpha+\norm{g_0''}_{L^{\infty}}<\infty$.
Finally, using the Cauchy-Schwarz inequality as earlier,
\begin{align*}
	\norm{f_0'-g_0'}_{L^2}^2&\leq L^2\norm{f_0''-g_0''}_{L^2}^2,\\
	\norm{f_0-g_0}_{L^2}^2&\leq L^4\norm{f_0''-g_0''}_{L^2}^2,
\end{align*}
namely,
\begin{equation*}
	\norm{f_0-g_0}_{H^2}^2<C_0\epsilon,
\end{equation*}
with constant $C_0:=(1+L^2+L^4)c_0$.

For $f_{\epsilon}:=(f_0,f_1,f_2,f_3,f_4)$,
the above demonstrates that
$f_{\epsilon}\in\dom A\subset\mathcal{H}$ and
\begin{equation*}
	\norm{f_{\epsilon}-g}_1^2<C\epsilon,
\end{equation*}
with positive constant $C:=C_0+C_2+2<\infty$.
\end{proof}

\begin{proof}[\emph{\textbf{Proof of Lemma \ref{lem:positive-imag}}}]
We denote by $\{\mu_j\}$ and $\{\lambda_j\}$, $j\in\N$, the
sets of eigenvalues of operators $iA$ and $A$ respectively.
Since $\lambda_j=-i\mu_j$ for all $j\in\N$, proving that
$\Re\mu_j<0$ for all $j\in\N$ implies the desired result.
To prove that, we consider the dynamical system formed by
\eqref{eq:op-evolution} for $f\in\dom A\subset\mathcal{H}$
and show that the norm induced by \eqref{eq:inner-prod}
defines a Liapunov function.
The result then follows from the Liapunov stability method
\cite{Perko2001}.

We use the problem's PDE formulation
\eqref{eq:harvester}--\eqref{eq:harvester-BCs}
and denote by $\mathcal{E}(t)$ the total energy as a function of time $t$,
which is equivalent to the norm induced by \eqref{eq:inner-prod}.
We then have that
\begin{align*}
	\mathcal{E}(t)&=\frac{1}{2}\int_0^L\left[Ew''\bar{w}''+m\dot{w}
		\bar{\dot{w}}+G\theta'\bar{\theta}'+J\dot{\theta}\bar{\dot{\theta}}
		+S\left(\dot{w}\bar{\dot{\theta}}+\dot{\theta}\bar{\dot{w}}\right)
		\right]dx+\frac{1}{2}C_pv\bar{v}\\
	\dot{\mathcal{E}}(t)&=\int_0^L\Re\bigl[Ew''\bar{\dot{w}}''
		+m\dot{w}\bar{\ddot{w}} +G\theta'\bar{\dot{\theta}}'
		+J\dot{\theta}\bar{\ddot{\theta}} +S(\dot{w}\bar{\ddot{\theta}}
		+\dot{\theta}\bar{\ddot{w}})\bigr]dx
		+C_p\Re\,(v\bar{\dot{v}}).
\end{align*}
From equations \eqref{eq:harvester} we derive respectively,
\begin{align*}
	\Re\,(m\dot{w}\bar{\ddot{w}}+S\dot{w}\bar{\ddot{\theta}})
		&=-\Re\,(Ew''''\bar{\dot{w}}),\\
	\Re\,(J\dot{\theta}\bar{\ddot{\theta}}+S\dot{\theta}\bar{\ddot{w}})
		&=\Re\,(G\theta''\bar{\dot{\theta}}),\\
	C_p\Re\,(v\bar{\dot{v}})&=-\frac{1}{R}\abs{v}^2
		-C_D\Re[\bar{\dot{w}}'(t,L)v].
\end{align*}
Using those and boundary conditions \eqref{eq:harvester-BCs} we find
\begin{equation*}
	\dot{\mathcal{E}}(t)=-k_1\abs{\dot{w}'(t,L)}^2
		-k_2\abs{\dot{\theta}(t,L)}^2-\frac{1}{R}\abs{v}^2
		-(C_I+C_D)\Re\,[\bar{\dot{w}}'(t,L)v].
\end{equation*}
If $C_I=-C_D$ then $\mathcal{E}(t)$ is monotone decreasing,
which yields the desired result.
\end{proof}

\section*{Acknowledgements}
I thank Marianna A. Shubov for her advice and support related to this work.
This work was funded in part by a Dissertation Year Fellowship awarded
by the Graduate School of the University of New Hampshire.

\bibliography{sources}
\bibliographystyle{abbrv}

\end{document}